\newcommand{\beq}{\begin{equation}}
\newcommand{\enq}{\end{equation}}
\newtheorem{lemma}{Lemma}
\newtheorem{proposition}{Proposition}
\newtheorem{theorem}{Theorem}
\newtheorem{definition}{Definition}
\newcommand{\Z}{\mathbb{Z}}
\newcommand{\N}{\mathbb{N}}
\newcommand{\T}{T}
\newcommand{\E}{\tau_*}
\newcommand{\surf}{S_{0,\infty}}
\newcommand{\Sn}{\Sigma_{0,n}}
\newcommand{\B}{\mathcal{B}}
\newcommand{\Bd}{\mathcal{B}^{\frac{1}{2}}}
\newcommand{\Bext}{\widehat{\Bd}}
\newcommand{\C}{\mathcal{C_P}(\surf)}
\newcommand{\AC}{{\rm{Aut}}(\C)}
\newcommand{\LE}{{\mathcal{L}}(E)}
\newcommand{\LF}{{\mathcal{L}}(F)}
\newcommand{\LEA}{\mathcal{L}_{\surf}^A(E)}
\newcommand{\LFA}{\mathcal{L}_{\surf}^A(F)}
\newcommand{\LFn}{\mathcal{L}_{\surf}(F_n)}
\newcommand{\LEn}{\mathcal{L}_{\surf}(E_n)}
\title[Automorphisms of the asymptotic pants complex of an infinite surface]{On the automorphism group of the asymptotic pants complex
of an infinite surface of genus zero}
\author{
Louis Funar}
\address{Institut Fourier, UMR 5582, Laboratoire de Mathématiques,
Université Grenoble Alpes, CS 40700, 38058 Grenoble cedex 9, France, 
email:louis.funar@univ-grenoble-alpes.fr}
\author{Maxime Nguyen}
\address{Institut Fourier, UMR 5582, Laboratoire de Mathématiques,
Université Grenoble Alpes, CS 40700, 38058 Grenoble cedex 9, France, 
email: ngmaxime@gmail.com}
\date{}
\begin{document}

\maketitle

\begin{abstract}
The braided Thompson group $\B$ is an asymptotic mapping class group   
of a sphere punctured along the standard Cantor set, endowed with a rigid structure.
Inspired from the case of finite type surfaces we consider a Hatcher-Thurston 
cell complex whose vertices are asymptotically trivial pants decompositions. 
We prove that the automorphism group $\Bext$ of this complex is  
also an asymptotic mapping class group in a weaker sense.  Moreover $\Bext$ is obtained by 
$\B$ by  first adding  new elements called half-twists and further 
completing it. 
\end{abstract}

\textbf{2000 MSC classsification: 57 N 05, 20 F 38, 57 M 07, 20 F 34}

\textbf{Keywords:} universal mapping class group,
 pants complex, infinite type surfaces, 
group actions, braided Thompson group.


\section{Definitions and statements}
\subsection{Motivation}
Inspired by Royden's theorem on the holomorphic automorphisms 
of Teichm\"uller spaces Ivanov proved in \cite{ivanov} 
(subsequently completed by Korkmaz  and Luo \cite{korkmaz,luo}) that 
the automorphism group of the complex of curves of most compact surfaces 
coincides with the extended mapping class group.  
This was the start-point of many results of similar nature,
coming under the name of rigidity theorems. Margalit proved the 
rigidity  (see \cite{margalit}) of  pants complexes and further work 
extended this to even stronger rigidity 
theorem (see e.g. \cite{A,schmutz,luo,irmak,irmakKorkmaz,irmakMccarthy,
korkmazPapadopoulos} for a non-exhaustive list). 

The study of such automorphisms groups  in the pro-finite or pro-unipotent 
categories seems fundamental in Grothendieck's program.
For instance, although the pro-finite pants complexes are still rigid 
the automorphism group of the  corresponding pro-finite curve complexes 
is a version of the Grothendieck-Teichm\"uller group 
(\cite{L} and references there).

Simpler versions of this general question concern the 
solenoids, whose study was started in \cite{BPS}, 
and then infinite type surfaces corresponding to direct limits.   
The purpose of this article is to make progress in the second case using 
the formalism of asymptotically rigid homeomorphisms and braided Thompson 
groups developed in \cite{universalmcg,braidedthompson}.   
A previous result in this direction is the 
rigidity theorem proved in \cite{fossasnguyen} for an infinite type 
planar surface related to the Thompson group  $T$ (see \cite{CFP}). 

In this note we will consider an infinite surface obtained from the 
sphere by deleting the standard Cantor set from the equator. 
Since its mapping class group is a topological group, the authors of \cite{universalmcg} 
introduced a smaller subgroup $\B$ called  asymptotically rigid mapping class group, which was proved to be 
finitely presented.  As its name suggests, one restricts to mapping classes of those homeomorphisms which preserve 
an extra structure on the surface, but only outside of large enough compact sub-surfaces. 

There were different but closely related versions of such asymptotically rigid mapping class groups considered 
independently by Brin (\cite{brin2}) and Dehornoy (\cite{dehornoy1,dehornoy2}).   All of them are usually designed by 
the generic term of braided Thompson groups, as they occur as extensions of some Thompson group 
(see \cite{CFP}) by an inductive limit of mapping class groups, in particular by infinite braid groups.  

For instance, $\B$ is the extension of the 
Thompson group $V$ by an inductive limit of pure mapping class groups of holed spheres corresponding to 
an exhaustion of the sphere punctured along a Cantor set.  The extra structure
in its definition 
is needed to make unique the extension of a given homeomorphism defined on a compact sub-surface to the 
whole surface. The action at infinity of such a homeomorphism is an avatar of the action of the 
Thompson group $V$ on the Cantor set.

The novelty in the present setting is the appearance of some mild   
non-rigidity phenomenon of the corresponding Hatcher-Thurston asymptotic pants complex.
Nevertheless the group of automorphisms is still an asymptotic 
mapping class groups of the surface, but now the extra structure preserved is weakened.

\subsection{The surface $\surf$}\label{sectionsurfaceinf}

Let $\mathbb{D}^2$ be the (hyperbolic) disc and suppose that its boundary 
$\partial\mathbb{D}^2$ is 
parametrized by the unit interval (with  endpoints identified). 
Let $\E$ denote the (dyadic) Farey triangulation 
of $\mathbb{D}^2$. This triangulation is given by the family of bi-infinite geodesics representing the 
standard dyadic intervals, i.e. the family of geodesics $I_a^n$ joining the points $p=\frac{a}{2^n}$, 
$q=\frac{a+1}{2^n}$ on $\partial\mathbb{D}^2$, where $a,n$ are integers satisfying 
$0 \leq a \leq 2^n -1$. Let $T_3$ be the dual graph of $\E$, which is an infinite (unrooted) 
trivalent tree. Let $\Sigma$ be a closed $\delta$-neighborhood of $T_3$.

 Let $\surf$ be the infinite surface obtained from gluing two copies of $\Sigma$ along its boundary. 
We assume in addition that the family of arcs coming from the two copies of $\E$ defines a collection 
of simple closed curves, denoted by $E$.

 A {\em pants decomposition} of a surface  is a maximal collection of distinct homotopically nontrivial simple 
closed curves on it which are pairwise disjoint and non-isotopic. The complementary regions 
(which are 3-holed spheres) are called {\em pair of pants}.
The collection of simple closed curves $E$ is a {\em pants decomposition} of $\surf$. Subsequently, $E$ 
will be called the {\em standard pants decomposition} of $\surf$. 

\begin{definition}
 A {\em prerigid structure} on a surface is a collection of disjoint properly embedded  line segments, 
 such that the complement of their union  has two connected components.

 A {\em rigid structure} is the data consisting of a 
pants decomposition and a prerigid structure such that : 
\begin{enumerate}
 \item  The traces of the prerigid structure on each pair of pants are made of three 
connected components, called {\em seams};
 \item  For each pair of boundary circles of a given pair of pants, there is exactly one seam joining the two circles.

\end{enumerate}
\end{definition}

We will often call {\em rigid surface} a surface endowed with a rigid structure. 

We arbitrarily fix a prerigid structure associated to the standard pants decomposition $E$ to 
obtain a rigid structure which is called the \textit{standard rigid structure} of $\surf$.  The 
complement in $\surf$ of the union of lines of the canonical prerigid structure has two components: 
we distinguish one of them as the \textit{visible side} of $\surf$. Remark that the visible side 
of $\surf$ is homeomorphic to the initial surface $\Sigma$. 

\begin{figure}[ht!]
\begin{center}
\includegraphics[width=0.8\textwidth]{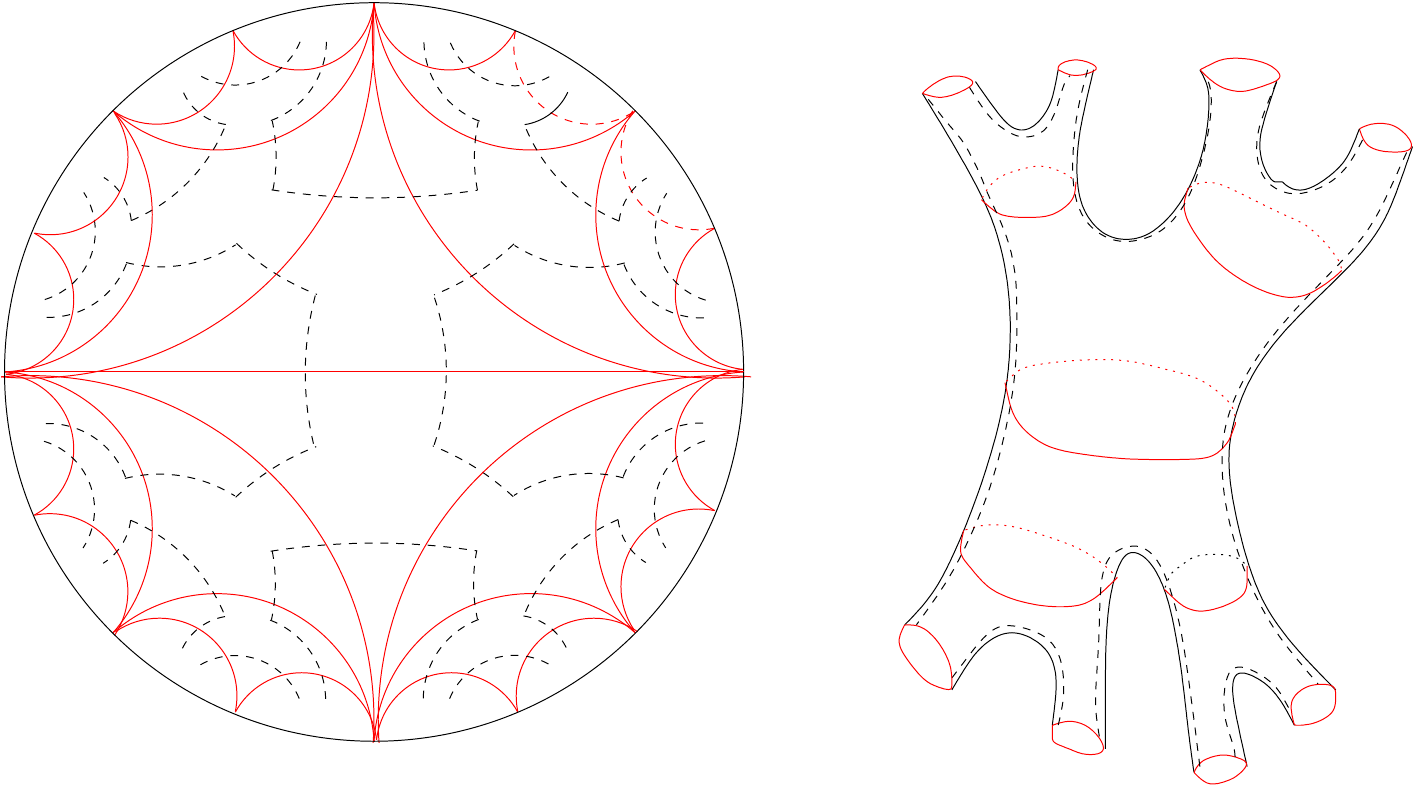}
\caption{\label{sinfdecompcouture}The surfaces $(\Sigma,\tau_*)$ and  
$(\surf,E)$}
\end{center}
\end{figure}

\subsection{The mapping class groups $\B$, $\Bd$ and $\Bext$}\label{sectionBplus}

\begin{definition}
 A compact sub-surface $\Sn \subset \surf$ (of genus zero with $n$ boundary components) is {\em almost admissible} if its boundary is contained in the standard pants decomposition $E$. 
 The {\em level} of a compact sub-surface is the number $n$ of 
its boundary components. 
\end{definition}

Observe that any almost admissible sub-surface $\Sn\subset \surf$ inherits 
a (standard) rigid structure by restricting the standard rigid structure. 

\begin{definition}
Consider an almost admissible sub-surface $\Sn \subset \surf$ endowed with an arbitrary rigid structure. 
We say that the rigid sub-surface $\Sn$ is {\em quasi-admissible} if the seams 
of $\Sn$ have the same endpoints as the seams of the standard rigid structure $E$ on $\surf-\Sn$, so 
that we can glue together the seams of $\Sn$ with the seams of the standard rigid structure on 
$\surf-\Sn$ to obtain a prerigid structure 
on $\surf$.   The visible side of $\surf$ induced from $\Sn$ is the one containing the visible side of $\Sn$.  
If the trace of this visible side on  $\surf-\Sn$ coincides with the trace of the 
visible side of the standard rigid structure $E$ we say that the rigid sub-surface $\Sn$ is {\em admissible}. 
\end{definition}

\begin{definition}
Let $f$ be a homeomorphism of $\surf$, endowed with some rigid structure. 
One says that $f$ is  {\em almost-rigid} if it stabilizes the  pants decomposition underlying the rigid structure.
Further, $f$ is {\em quasi-rigid} if it maps the pants decomposition into itself and the seams 
into the seams. If moreover $f$ sends the visible side into the visible side, then $f$ is said 
to be {\em rigid}. 
\end{definition}

Let $\Sn\subset \surf$ be a quasi-admissible rigid sub-surface and 
$f$ a homeomorphism of $\surf$. 
When $f(\Sn)$ is quasi-admissible, the image of the rigid structure of $\Sn$ by $f$ 
is also a rigid structure on $f(\Sn)$, which is said to be {\em induced by} $f$.

\begin{definition}
The homeomorphism $f$ of $\surf$ is {\em asymptotically  quasi-rigid} (resp.  
{\em asymptotically almost-rigid})  if  there exists an almost-admissible sub-surface 
$\Sn \subset \surf$, called a {\em support} of $f$, such that:
\begin{enumerate}
\item $f(\Sn)$, with the rigid structure induced by $f$ from the standard rigid structure, is  quasi-admissible; 
\item the restriction of $f:\surf - \Sn\to \surf-f(\Sn)$ is quasi-rigid (resp. almost-rigid). 
\end{enumerate}
If, moreover $f(\Sn)$ is admissible and $f:\surf - \Sn\to \surf-f(\Sn)$ is 
rigid, then we call $f$ {\em asymptotically rigid}. 
\end{definition}

\begin{definition}
The {\em asymptotic mapping class groups} $\Bd$, $\Bext$ and $\B$ of $\surf$ 
denote the groups of isotopy classes of asymptotically quasi-rigid, 
asymptotically almost-rigid and orientation preserving asymptotically rigid homeomorphisms, respectively. 
\end{definition}
 The group $\B$ appeared in \cite{universalmcg}, where it was called  
the universal mapping class group of genus zero. 

We will speak below of {\em asymptotically rigid ({\rm resp.} quasi-rigid)} mapping classes, as being 
isotopy classes of asymptotically rigid (resp. quasi-rigid) homeomorphisms.  

\subsection{The complex of pants decompositions of $\surf$}\label{sectioncomplexeinf}
An {\em asymptotically trivial pants decomposition} of $\surf$ is a pants decomposition which coincides with $E$ 
outside an admissible sub-surface. We define a cellular complex whose vertex set is the set of 
asymptotically trivial pants decompositions.

\begin{definition}
 Let $F$ and $F'$ be two asymptotically trivial pants decompositions of $\surf$. Let $c$ be a curve of 
$F$. We say that $F$ and $F'$ differ by an {\em elementary move} along the curve $c$ if $F'$ is obtained 
from $F$ by replacing $c$ by another curve which intersects $c$ twice and does not intersect the other
 curves of $F$. 
\end{definition}

\begin{figure}[ht!]
\begin{center}
\includegraphics[width=0.5\textwidth]{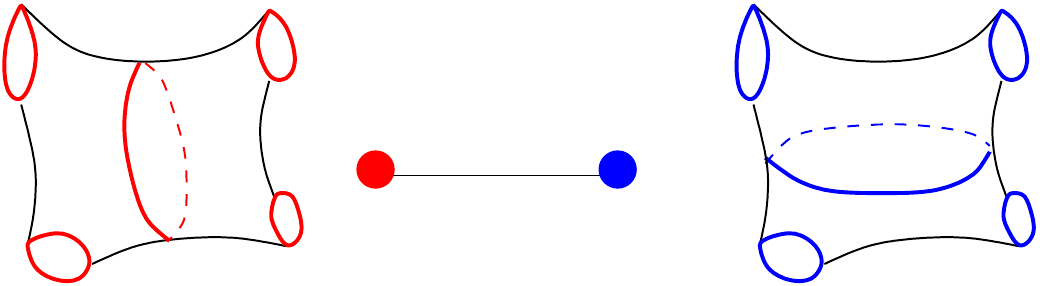}
\caption{\label{mvtelementaire} Elementary move}
\end{center}
\end{figure}

\begin{definition}
 Let $\C$ denote the {\em Hatcher-Thurston  pants complex} of $\surf$, defined in (\cite{universalmcg}, Def.5.1)
  as follows :
\begin{itemize}
 \item The vertices are the asymptotically trivial pants decompositions of $\surf$ ;
 \item The edges correspond to pairs of pants decomposition which differ by an elementary move ;
 \item The 2-cells are introduced to fill triangular cycles (see Fig. \ref{deuxcelltriangle}),
  square cycles corresponding to commutativity of moves with disjoint supports and pentagonal cycles 
  (see Fig. \ref{deuxcellpenta}).
\end{itemize}
\begin{figure}[ht!]
\begin{center}
\includegraphics[width=0.4\textwidth]{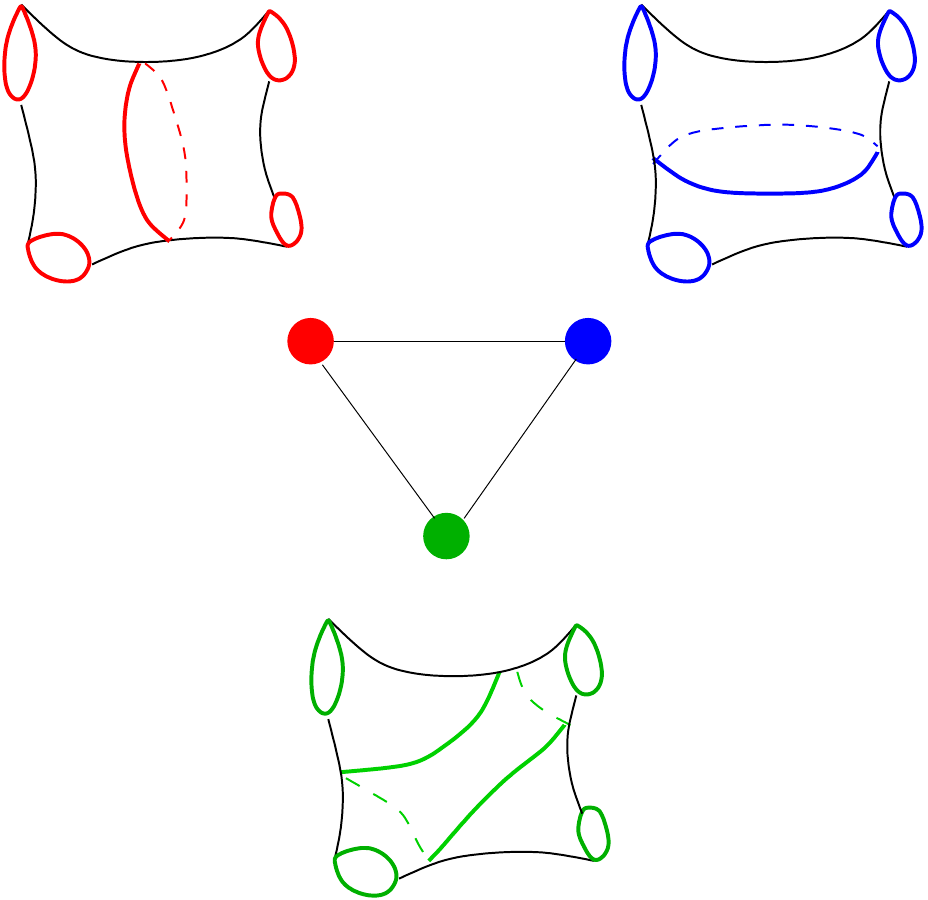}
\caption{\label{deuxcelltriangle} Triangular 2-cell in $\C$.}
\end{center}
\end{figure}

\begin{figure}[ht!]
\begin{center}
\includegraphics[width=0.5\textwidth]{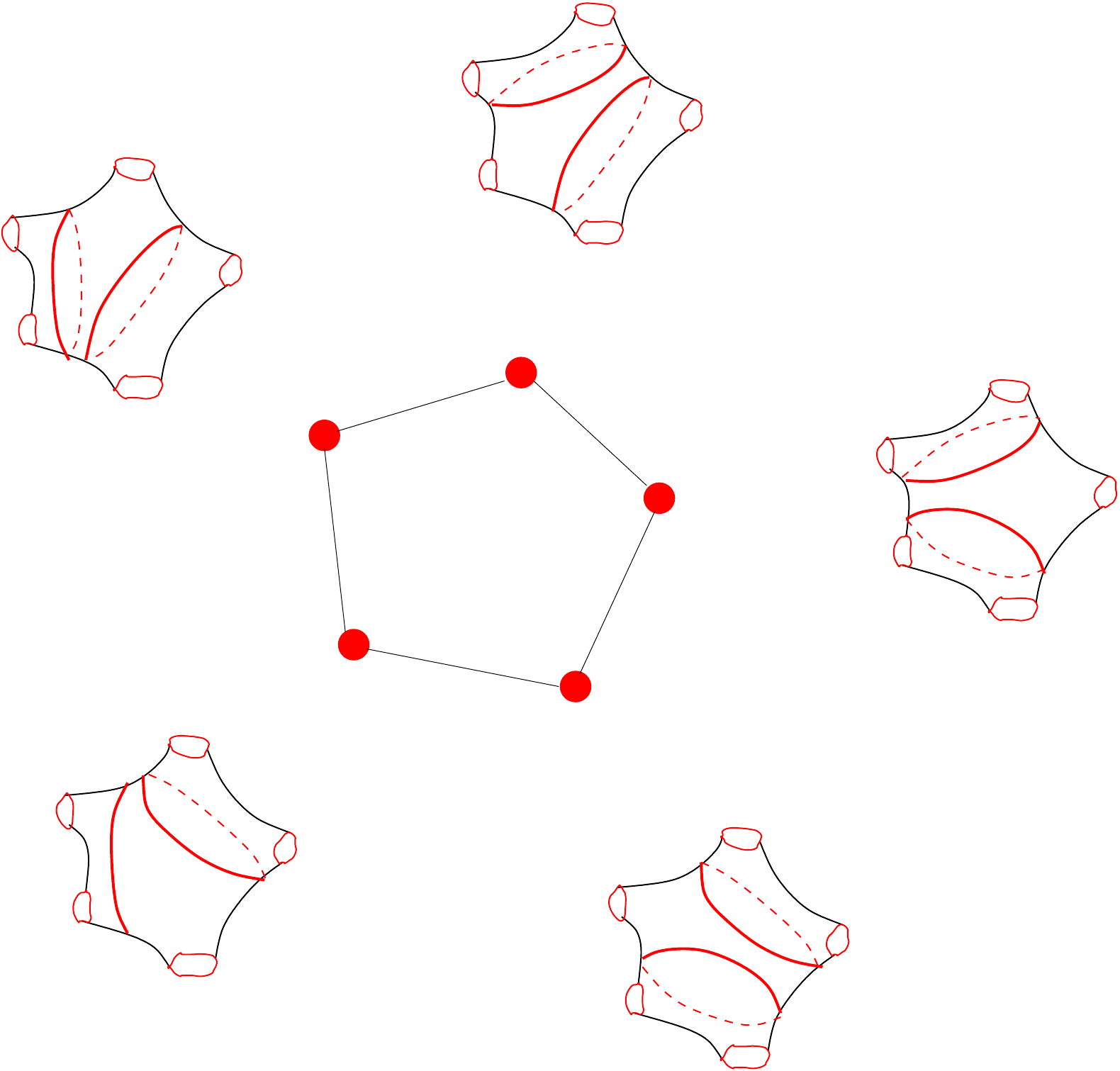}
\caption{\label{deuxcellpenta} Pentagonal 2-cell in $\C$.}
\end{center}
\end{figure}
\end{definition}

It is known (see \cite{universalmcg}, Prop.5.4) that $\C$ is connected and simply connected and 
$\B$ acts cellularly on it, with one   orbit of $0$-cells, one 
orbit of $1$-cells, one orbit of triangular $2$-cells, one orbit of pentagonal $2$-cells but infinitely many 
orbits of square $2$-cells. 

\subsection{The main result}

\begin{theorem}
 The asymptotic mapping class group $\Bext$ is isomorphic to the group of automorphisms of $\C$.
\end{theorem}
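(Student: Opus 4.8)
The plan is to establish an isomorphism $\Bext \xrightarrow{\sim} \AC$ by constructing a natural action homomorphism and then proving it is both injective and surjective. The forward map $\Phi : \Bext \to \AC$ is the obvious one: any asymptotically quasi-rigid homeomorphism $f$ of $\surf$ permutes simple closed curves, preserves disjointness and intersection numbers, and sends an elementary move to an elementary move, hence induces a simplicial automorphism of $\C$ that respects the triangular, square and pentagonal filling $2$-cells. The first task is to check that $\Phi$ is well-defined, i.e.\ that $f$ really sends asymptotically trivial pants decompositions to asymptotically trivial ones (this follows because $f$ coincides with a quasi-rigid map outside an admissible subsurface) and that isotopic homeomorphisms induce the same automorphism.

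For injectivity I would argue that an element in $\ker\Phi$ fixes every asymptotically trivial pants decomposition, hence fixes every curve of the standard decomposition $E$ and every curve obtainable from $E$ by finitely many elementary moves. The key point is that this curve system is rich enough to pin down the isotopy class: a quasi-rigid homeomorphism fixing all curves of $E$ together with a sufficient supply of transverse curves must be isotopic to the identity, because fixing each pair of pants setwise and fixing the curves that detect the gluing/twisting parameters leaves no room for a nontrivial mapping class (the only ambiguity a quasi-rigid map retains over a rigid one is recorded by half-twists and twists, and these are all detected by the action on curves crossing the relevant pants). This is where one must be careful to use $\C$ rather than merely the curve complex, since the half-twist $i_R$ and the braids act nontrivially yet preserve $E$ as a set; the $2$-cell structure and the transverse curves are what separate them.

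The main work, and the expected main obstacle, is surjectivity: given an arbitrary automorphism $\varphi \in \AC$ one must produce a homeomorphism of $\surf$ inducing it. The strategy is the standard rigidity template: first show $\varphi$ is determined by its action on vertices and preserves the combinatorial types of cells, then use the local structure around each pants decomposition to reconstruct an action on individual curves and on the dual trivalent tree $T_3$. Because $\surf$ has infinite type, one works level by level over admissible subsurfaces $\Sn$, recovering on each finite piece an automorphism of the finite pants complex and invoking Margalit's finite rigidity theorem (\cite{margalit}) to realize it by a mapping class; the compatibility of these pieces under inclusion of admissible subsurfaces yields an asymptotically quasi-rigid homeomorphism $f$ with $\Phi(f)=\varphi$. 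The delicate point is controlling the \emph{non-rigidity}: unlike the classical case, $\varphi$ need not come from an asymptotically rigid map, so the reconstruction must allow the extra half-twist degrees of freedom, and one has to verify that the infinite product of local half-twist corrections assembles into a genuine element of $\Bext$ (using the structural normal form $x = f\cdot\prod_{i=0}^{\infty}(d_{I_{a_i}^{n_i}})^{p_i}$ established above). Matching the combinatorial data to precisely the quasi-rigid freedom, rather than the smaller rigid freedom, is exactly the mechanism producing the failure of rigidity, and verifying convergence and well-definedness of the assembled product is the crux of the argument.
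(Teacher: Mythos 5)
Your proposal follows essentially the same route as the paper's proof: the same natural homomorphism $\Psi:\Bext\rightarrow\AC$, injectivity by exhibiting vertices moved by a nontrivial element (curves obtained by elementary moves detect the half-twists and twists, with Margalit's injectivity handling mapping classes supported on compact admissible pieces), and surjectivity by restricting the automorphism to a compact admissible sub-surface, invoking Margalit's rigidity theorem there, and then correcting by half-twists whose infinite product converges in $\Bext$ thanks to the normal form $x=f\cdot\prod_{i}(d_{I_{a_i}^{n_i}})^{p_i}$. The only minor deviation is that the paper applies Margalit once and then propagates outward pair-of-pants by pair-of-pants, concluding with the lemma that an automorphism of $\C$ is determined by its values on $E$ and its neighbors, whereas you suggest gluing Margalit realizations across an exhaustion; this is a variation of detail within the same template, and you correctly identify the genuine crux, namely that the combinatorially allowed local actions on each $V_c(E)\cong\Z$ are exactly those realized by half-twists, which is what forces the limit to lie in $\Bext$ rather than merely in the full mapping class group.
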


Before to proceed with the proof we will give more details about the groups $\Bd$ and $\Bext$ in section 
\ref{prelim}, by providing a finite generating set for $\Bd$ and explaining in which sense $\Bext$ is a 
completion of $\Bd$. We introduce the subgroups $D$ and $\widehat{D}$ of half-twists and give their structure, to be used 
later. In particular, one characterizes  the group of half-twists as the common 
stabilizer of the standard pants decomposition $E$ and a pair of pants on $\surf$. 

We prove in section \ref{decompositions} that $\Bext$ acts faithfully  on the complex $\C$, and thus it remains 
to show that every automorphism of $\C$ is induced by an element of $\Bext$. 
From an explicit geometric characterization of the 2-cells of $\C$ we deduce in section \ref{sectionautomorphismes} 
that two automorphisms of $\C$ which coincide on the vertices adjacent to the base vertex $E$ should be equal.  
One additional ingredient is the link graph $\LF$ of a vertex $F$ and its associated collapsed graph $\LFA$, whose vertices are in one-to-one correspondence with the curves in the pants decomposition $F$. We then study the action of half-twists, 
on these links.
 
The final steps of the proof of our main result are given in section \ref{cascompact}. Given an automorphism of $\C$ we 
construct an element of $\Bd$ with the same action on the restricted link of $E$ associated to a compact sub-surface. 
By induction we obtain a sequence of elements in $\Bd$ which coincide with the given automorphism on sub-sets 
of $\LE$ corresponding to bigger and bigger compact sub-surfaces. The associated infinite product 
is an element of $\Bext$ whose action on $\LE$, and hence on all of $\C$ is the prescribed one.

\section{Preliminaries}\label{prelim}

\subsubsection*{Thompson groups and $\B$}

There is a natural projection $\pi : \surf \rightarrow \Sigma$, where $\Sigma$ was defined in section \ref{sectionsurfaceinf}, such that the pullback of the arcs 
of $\E$ is the set of closed curves of $E$. Then we have a bijection  between the set of 
standard dyadic intervals and the set of closed curves of $E$:

$$\{\text{curves of } E\} \leftrightarrow \{\text{arcs of } \E\} \leftrightarrow \{\text{standard dyadic intervals}\} $$

Recall that the Thompson group $V$ is the group of right-continuous bijections of $S^1$ that map images of 
dyadic rational numbers to images of dyadic rational numbers, that are differentiable except at 
finitely many images of dyadic rational numbers, and such that, on each maximal interval on which 
the function is differentiable, the function is linear with derivative a power of 2.

The Thompson group $T$ is the group of piecewise linear homeomorphisms of $S^1$ that map images of 
dyadic rational numbers to images of dyadic rational numbers, that are differentiable except at 
finitely many images of dyadic rational numbers and on intervals of differentiability on which the 
function is differentiable, the function is linear with derivative a power of 2.

For more details about Thompson groups, see \cite{CFP}. Furthermore, we know  (see \cite{braidedthompson}) that $T$ can be 
viewed as an asymptotic mapping class group of the planar surface $\Sigma$. 

Let $K_{\infty}^*$ be the inductive limit $\bigcup_{n=0}^{\infty} K^*(3\cdot 2^n)$ where $K^*(n)$ is the 
pure mapping class group of the $n$-holed sphere. We have the exact sequence (\cite{universalmcg}) :

$$ 1 \longrightarrow K_{\infty}^* \longrightarrow \B \longrightarrow V \longrightarrow 1$$

Moreover, we have the following result from (\cite{universalmcg}, proof of Prop. 2.4):

\begin{proposition}\label{propvisibleT}
 The Thompson group $T$ is the subgroup of elements of $\B$ which preserve the visible side of $\surf$. 
\end{proposition}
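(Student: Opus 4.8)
The plan is to realize the stabilizer of the visible side as the asymptotic mapping class group of the visible side itself, which is a copy of $\Sigma$, and then to invoke the identification of the latter with $\T$ recalled above. The geometric input is the \emph{mirror involution} $\sigma$ of $\surf$ that exchanges the two copies of $\Sigma$ and fixes pointwise the lines of the standard prerigid structure; it is orientation reversing and its fixed locus is exactly the gluing locus $\partial\Sigma$, whose complement consists of the visible and the invisible sides. Writing $G\subset\B$ for the subgroup of elements preserving the visible side, I would construct a restriction homomorphism $\rho\colon G\to\T$ together with a doubling homomorphism in the opposite direction, and show that they are mutually inverse. Equivalently, one shows that the projection $\B\to V$ restricts on $G$ to an injection whose image is the subgroup $\T\subset V$ of cyclic-order preserving elements.

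For the inclusion $\T\subseteq G$ (surjectivity of $\rho$), I start from $t\in\T\cong\mathrm{AsympMCG}(\Sigma)$, represented by an asymptotically rigid homeomorphism $g$ of $\Sigma$, and define its \emph{double} $\widehat g$ on $\surf$ by $\widehat g=g$ on the visible side and $\widehat g=\sigma\,g\,\sigma^{-1}$ on the invisible side. Since $g$ preserves the induced rigid structure on $\Sigma$, in particular the traces of $E$ and the seams along $\partial\Sigma$, the two pieces agree on $\partial\Sigma$ and $\widehat g$ is a well defined homeomorphism; it is orientation preserving because conjugating by the orientation reversing $\sigma$ compensates the side swap, and it is asymptotically rigid with $\widehat g(\Sn)$ admissible whenever $g$ is rigid outside a compact admissible piece of $\Sigma$. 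By construction $\widehat g\in G$ and $\rho(\widehat g)=t$.

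For the inclusion $G\subseteq\T$, let $f\in G$. Then $f$ preserves $\partial\Sigma$ and restricts to a homeomorphism of the visible side $\cong\Sigma$ which is again asymptotically rigid, so $\rho(f):=[\,f|_{\text{visible}}\,]\in\mathrm{AsympMCG}(\Sigma)=\T$ is well defined and $\rho$ is a homomorphism. Under the identification of the curves of $E$ with the standard dyadic intervals recalled above, $\rho$ is compatible with $\B\to V$ along the inclusion $\T\subset V$: preserving the visible side forces the induced permutation of the ends, the standard Cantor set, to preserve the cyclic order of $S^1$, which is precisely the condition cutting out $\T$ inside $V$. Hence $\rho(G)\subseteq\T$, and together with the previous paragraph the image is all of $\T$.

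It remains to prove that $\rho$ is injective, which I expect to be the main obstacle. Suppose $\rho(f)=\mathrm{id}$, so that $f|_{\text{visible}}$ is isotopic through asymptotically rigid maps to the identity; after an ambient isotopy one may assume that $f$ is the identity on the closed visible side, in particular on $\partial\Sigma$. Then $f$ is supported on the invisible side rel its boundary. The key point is that $\Sigma$ is a closed regular neighborhood of the contractible tree $T_3$, hence a disc, so its interior carries no essential simple closed curve and the mapping class group of the invisible side rel $\partial\Sigma$ is trivial; consequently $f$ is isotopic to the identity in $\B$. Equivalently, $\ker(\B\to V)=K_\infty^*$ meets $G$ trivially, since the generating Dehn twists of $K_\infty^*$ are supported along curves of $E$ that cross $\partial\Sigma$ and therefore cannot preserve the visible side. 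This shows that $\rho$ is an isomorphism of $G$ onto $\T$, identifying $\T$ with the subgroup of $\B$ preserving the visible side.
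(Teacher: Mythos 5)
The paper itself does not prove this proposition (it is quoted from \cite{universalmcg}), so your argument must be judged against the standard one behind that citation, which is indeed the doubling/restriction scheme you outline: double asymptotically rigid homeomorphisms of $\Sigma$ to embed $\T$ into the visible-side stabilizer $G$, restrict in the other direction, and reduce everything to showing that a class which is the identity on the closed visible side is trivial. That strategy is correct, and your doubling paragraph is fine. The first genuine gap is the well-definedness of $\rho$: elements of $G\subset\B$ are isotopy classes, and an isotopy in $\surf$ between two visible-side-preserving representatives need not preserve the visible side at intermediate times, so $\rho(f):=[f|_{\mathrm{visible}}]$ requires an argument you do not give. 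It can be rescued, e.g.\ by noting that if $F$ preserves the visible side and is isotopic to $\mathrm{id}_{\surf}$ then $F$ acts trivially on the Cantor set of ends, and the composite $\T\to\B\to V$ is the standard injective inclusion with $V$ acting faithfully on the ends, so $[F|_{\Sigma}]=1$; or one can avoid $\rho$ entirely by proving instead that the doubling map $\T\to G$ is injective (via $\T\subset V$) and surjective, surjectivity being your injectivity lemma applied to $F\cdot\widehat{g}^{\,-1}$ with $g=F|_{\Sigma}$.

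The second gap is in that key lemma itself. The invisible side is \emph{not} a disc: it is a non-compact, simply connected planar surface with non-compact boundary and a Cantor set of ends, so Alexander's trick does not apply verbatim, and the sentence ``the mapping class group of the invisible side rel $\partial\Sigma$ is trivial'' is precisely the statement needing proof, not a consequence of contractibility alone. It is true, but the clean route uses the structure you already have: a representative which is the identity on the closed visible side and asymptotically rigid preserves each pair of pants of $E$ outside an admissible $\Sigma_{0,n}$, hence splits as a locally finite product of homeomorphisms of the \emph{compact} invisible hexagons fixing their boundaries, where Alexander's trick genuinely applies; alternatively one needs Epstein-type homotopy-implies-isotopy results for non-compact surfaces. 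Finally, your fallback argument is doubly flawed: proving that the generating Dehn twists of $K_{\infty}^*$ are not in $G$ would not prove $K_{\infty}^*\cap G=1$ (a product of twists could admit a visible-side-preserving representative without any factor doing so), and the claim that a twist along a curve of $E$ ``cannot preserve the visible side'' is itself an unproved assertion about \emph{every} representative of an isotopy class, of the same nature as the statement you are trying to establish.
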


\subsubsection*{Generators of $\B$}
We fix an admissible pair of pants $P$ of $\surf$ to be called the {\em fundamental pair of pants}.
The surface $\surf$ retracts onto a tree which is the adjacency graph of the standard pants decomposition $E$. 
Choosing a fundamental pair of pants amounts to enhance this tree to a {\em rooted tree} $\mathcal T$. 
In particular, curves of $E$ are in one-to-one correspondence with the mid-points of the edges of $\mathcal T$. 
Similarly,  the {\em fundamental four-holed sphere} is the union of two adjacent admissible pairs of pants, one of them 
being the fundamental one.
 
The group $\B$ is generated by the twist $t$  (see Fig \ref{twist2}), the braid $\pi$ 
(Fig  \ref{tresse}) and the lifts $\alpha$  (see Fig \ref{alpha}) and $\beta$ (see Fig \ref{beta}) 
of the two generators of $T$ which are usually denoted by the same letters. For more details see section 3 of \cite{universalmcg}.

\begin{figure}[ht!]
\begin{center}
\includegraphics[width=0.5\textwidth]{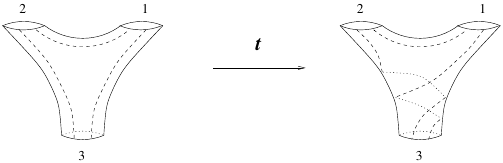}
\caption{\label{twist2}The action of a twist $t$ on the fundamental pair of pants.}
\end{center}
\end{figure}

\begin{figure}[ht!]
\begin{center}
\includegraphics[width=0.5\textwidth]{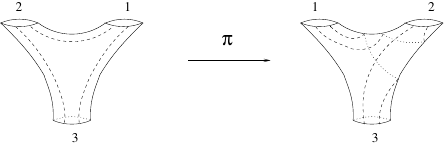}
\caption{\label{tresse}The action of a braid $\pi$ on the fundamental pair of pants.}
\end{center}
\end{figure}

\begin{figure}[ht!]
\begin{center}
\includegraphics[width=0.5\textwidth]{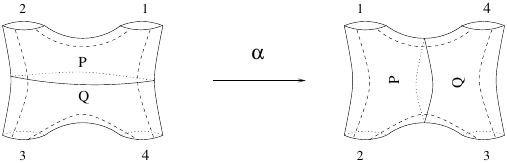}
\caption{\label{alpha}The action of $\alpha$ on the fundamental four-holed sphere.}
\end{center}
\end{figure}

\begin{figure}[ht!]
\begin{center}
\includegraphics[width=0.5\textwidth]{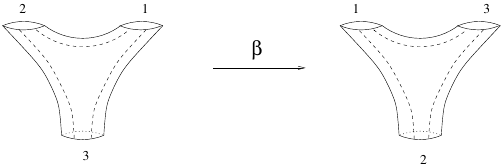}
\caption{\label{beta}The action of a braid $\beta$ on the fundamental pair of pants.}
\end{center}
\end{figure}

Recall that the subgroup of $\B$ consisting of those mapping classes represented by 
rigid homeomorphisms is isomorphic to $PSL_2(\Z)$ (see \cite{universalmcg}, Remark 2.1). 
The action of $PSL(2,\Z)$ on the rooted tree $\mathcal T$ is transitive. In particular, for any 
curve $a$ of $E$ the Dehn twist $t_a$ along the curve $a$ is a conjugate of $t$ in $\B$. 

Let $\Sn$ be a $n$-holed sphere embedded in $\surf$ such that its 
boundary components lie in 
the standard decomposition $E$ of $\surf$. Recall that the mapping 
class group $\mathcal{M}(\Sn)$ 
is the group of isotopy classes of homeomorphisms of $\Sn$ preserving 
the orientation. The elements 
of $\mathcal M(\Sn)$ are represented by homeomorphisms which can permute the boundary 
components of $\Sn$. We can 
assume that they also preserve the trace of these boundary components on 
the visible side. Then there 
exists an embedding $\mathcal{M}(\Sn) \rightarrow \B$ obtained by extending 
rigidly a homeomorphism  
representing a mapping class of $\mathcal{M}(\Sn)$.

\subsection{The extended mapping class group $\Bext$}
We now define some mapping classes which are not represented by 
asymptotically rigid homeomorphisms. 
These elements will generate the extended mapping class group $\Bd$, whose completion is  $\Bext$.

\subsubsection{The symmetry}

\begin{definition}
Let $i_R$ be the isotopy class of the quasi-rigid homeomorphism acting as the symmetry on $\surf$ 
which permute the visible side and the invisible side on each pair of pants of the standard 
decomposition $E$ and preserves the seams.  
\end{definition}

\subsubsection{The half-twists}
Let $a$ be a curve of the standard decomposition $E$. 
Using the metric on the rooted tree $\mathcal T$ we can speak about the distance between 
two curves of $E$ and hence between subsets of $E$. There exists then a unique  admissible 
pair of pants $P_a$ which is different from the fundamental pair of pants $P$,  
such that $a$ is the closest among the three boundary components of $P_a$ 
to the fundamental pair of pants. 

Let $a,b,c$ be the bounding circles of  $P_a$. By cutting along each of these three curves, we define three 
infinite connected components of $\surf$ respectively denoted by 
$S_a,S_b,S_c$ and one pair of pants $P_{a}$, such that $S_a$ contains the fundamental pair of pants. 
Let $D_{a}$ be a homeomorphism of $P_{a}$ such that 
$D_{a}$ fixes $a$ and interchanges the other two curves, i.e.   
$D_{a}(b) = c$ and $D_{a}(c)=b$, by means of a rotation of 
angle $\pi$. By definition the trace of $b$ on the visible side is sent 
by $D_{a}$ to the trace of $c$ on the invisible side 
and the trace of $c$ on the visible side is sent onto the trace of $b$ on the invisible 
side. Then we quasi-rigidly extend this homeomorphism on $\surf$. 
This means that $D_{a}$ acts on $S_a$ as  identity, sending the visible side of 
$S_b$ on the invisible side of $S_c$, the invisible side of 
$S_b$ on the visible side of $S_c$, the visible side of $S_c$ on the invisible side of $S_b$ and  the 
invisible side of $S_c$ on the visible side of $S_b$. 

\begin{definition}\label{halftwist}
We denote the isotopy class  of 
the homeomorphism $D_{a}$ by $d_a$ and call it the  
{\em half-twist along  $a$}.
\end{definition}
 We denote by 
$supp(d_{a})$ the sub-surface of $\surf$ 
on which $D_{a}$ acts non trivially up to homotopy, which is larger than its smallest support $P_a$. 
With the previous notations, we 
have $supp(d_{a}) = P_{a} \cup S_b \cup S_c$.

To emphasize the relation between $\Bd$ and the Thompson-type groups  
note that each curve of the standard decomposition $E$ is associated to a standard dyadic interval 
of the form $I_k^n:=[\frac{k}{2^n},\frac{k+1}{2^n}]$ where $k$ and $n$ are integers such 
that $0 \leq k \leq 2^n-1$. Moreover, the standard dyadic intervals $I_{2k}^{n+1}$ and $I_{2k+1}^{n+1}$ 
both define with $I_k^n$ a pair of pants in $E$. Hence, each half-twist along a curve belonging in $E$ 
is defined by a couple $(k,n) \in \N^2$ such that $0 \leq k \leq 2^n-1$ and we can also use the 
notation $d_a=d_{I_k^n}$. 
For all $n\leq m$ and $j,k$ satisfying  $0 \leq j \leq 2^n-1$ and  $0 \leq k \leq 2^m-1$, we 
have $supp(d_{I_j^n}) \cap supp(d_{I_k^m}) = \emptyset$ or $supp(d_{I_j^n}) \subset supp(d_{I_k^m})$. 
Therefore, for all sequences $(j_i,n_i,p_i)_{i \in \N}$ of elements of $\N \times \N \times \Z$ 
such that 
for all $i \in \N$, $ 0 \leq j_i \leq 2^{n_i}-1$ the sequence $(n_i)_{i \in \N}$ is non decreasing,
we can define the infinite product $\prod_{i=0}^{\infty} (d_{I_{j_i}^{n_i}})^{p_i}$ as an element of 
the mapping class group of the surface $\surf$. 

\begin{figure}[ht!]
\begin{center}
\includegraphics[width=0.4\textwidth]{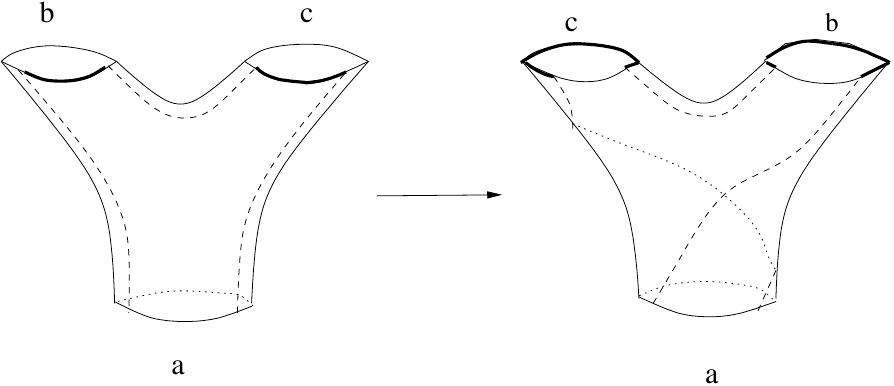}
\caption{\label{demitwist1}Action of a half-twist $d_{a}$ on the pair of pants $P_a$.}
\end{center}
\end{figure}


\subsubsection{The group $\Bd$}

\begin{proposition}\label{generators}
 The group $\Bd$ is  generated by one half-twist around 
some curve of the standard 
decomposition $E$, the symmetry $i_R$ and the elements $\alpha$ and $\beta$ of the 
group $\B$.
\end{proposition}

{\em Convention}. By a slight abuse of language we speak about  the image 
$g (S)$, where $g$ is an isotopy class of a 
homeomorphism which is rigid on $\surf-S$, 
by letting it be the admissible sub-surface image of $S$
by means of some homeomorphism representing $g$. 

\begin{proof}
Let $g\in \Bd$. By possibly composing with the symmetry $i_R$ we can assume that $g$ 
is the mapping class of an orientation preserving homeomorphism of $\surf$. 
There exists an admissible standard sub-surface $S \subset \surf$,  with $g(S)$ quasi-admissible such that $g$ 
is quasi-rigid outside $S$. There is no loss of generality in assuming that $S$ contains the fundamental 
pair of pants $P$. Let $a_j$, $1\leq j\leq n$, be those boundary components of $S$ such that the visible side induced from 
$g(S)$ on  the connected component of $\surf-S$ containing $a_j$ does not agree with the 
trace of the standard rigid structure $E$. 
Replace now $S$ by  $S'=S\cup\bigcup_{j=1}^nP_{a_j}$. Let $a_{j,0}, a_{j,1}$ and $a_j$  be the 
boundary components of  $P_{a_j}$. 
Since $g$ is quasi-rigid the element $gd_{a_1}d_{a_2}\cdots d_{a_n}$ is also quasi-rigid. 
But now the visible sides of  the connected components of 
$\surf-S'$  containing either $a_{j,0}$ or $a_{j,1}$, which are induced from $g d_{a_1}d_{a_2}\cdots d_{a_n}(S')$ 
agree with the trace of the standard rigid structure, since the half-twist exchange the visible and 
invisible sides of the permuted boundary components. The remaining boundary components 
of $S$ correspond to those of $S'$. It follows that  $g d_{a_1}d_{a_2}\cdots d_{a_n}(S')$ is admissible and 
hence  the mapping class $g d_{a_1}d_{a_2}\cdots d_{a_n}$ is  rigid outside $S'$ and hence 
asymptotically rigid.  This means that $g d_{a_1}d_{a_2}\cdots d_{a_n}\in \B$. 
Therefore $\Bd$ is generated by $\B$ and the set of half-twists along curves in $E$.

Further observe  that the twist  $t_a$ is given by $t_a=d_a^2$.   
Note that a half-twist along any curve of $\surf$ is conjugate 
to a half-twist along a curve of $E$ by an element of $\B$, since $\B$ acts 
transitively on the set of isotopy classes of simple closed curves of $\surf$. 

Let now $c$ be a curve of $E$.  Set $\pi_c$  for the conjugate of $\pi$ which fixes $c$ and acts in the same way on the 
pair of pants $P_c$ as $\pi$ does on the fundamental pair of pants. 
Denote by  $c_0$, $c_1$ the two other boundary components of $P_c$, 
so that $c,c_0,c_1$ is a  clockwise oriented triple, with respect to the local cyclic order structure 
induced by the planar visible part. 
We  claim that: 

\begin{lemma}\label{pi}
We have $\pi_c=d_cd_{c_0}^{-1}d_{c_1}^{-1}$. 
\end{lemma}

We can obtain $d_c, d_{c_0}$ and $d_{c_1}$ as conjugates of a given half-twist by elements of $T\subset \B$. 
Assuming this Lemma,   it follows that $\Bd$ is generated by $T$ (which is generated by $\alpha$ and $\beta$), 
$i_R$ and 
one half-twist, as claimed. 
\end{proof}

\begin{proof}[Proof of Lemma \ref{pi}]
Let us denote by  $c_{i0}$, $c_{i1}$ the two other boundary components of $P_{c_i}$, with the same convention as above. 
To compare these two elements we have to consider  
a larger sub-surface of level 4, see the 
following picture: 
 
\begin{center}
\includegraphics[width=0.6\textwidth]{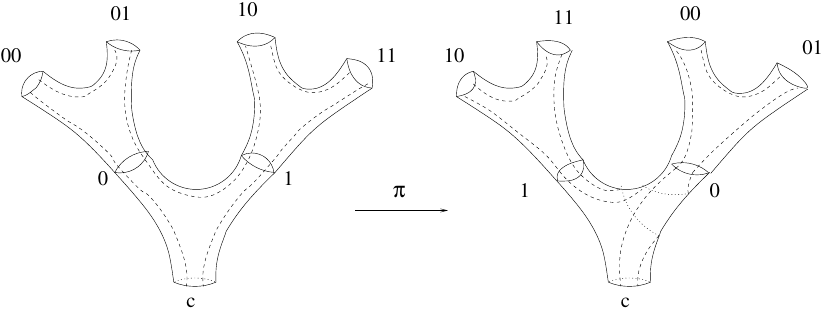}
\end{center}

The composition of half-twists $d_cd_{c_0}^{-1}d_{c_1}^{-1}$
on a sub-surface of level 4 is viewed below: 

\begin{center}
\includegraphics[width=0.6\textwidth]{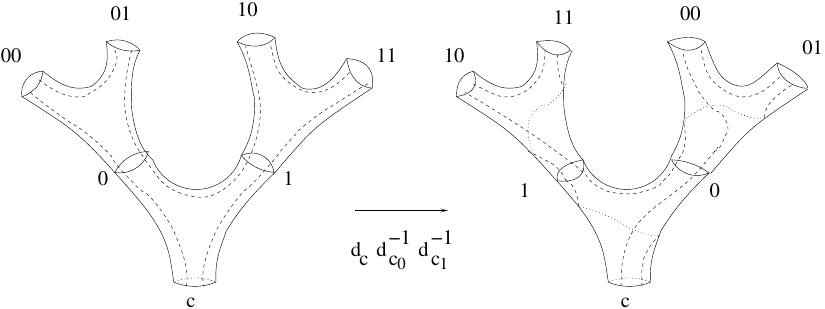}
\end{center}

Observe that the visible sides of the curves $c_0$ and $c_1$ are sent by 
$\pi_c$ into the visible sides of the curves $c_1$ and $c_0$, respectively. 
On the other hand,  the visible sides of the curves $c_0$ and $c_1$ are sent by 
$d_cd_{c_0}^{-1}d_{c_1}^{-1}$ into the invisible sides of the curves $c_1$ and $c_0$, respectively. 

Nevertheless, there exists an isotopy  of $\surf$ which is identity outside the 
sub-surface $\Sigma_{0,4}$ sending the rigid structure induced by $\pi_c$ into 
the one induced by $d_cd_{c_0}^{-1}d_{c_1}^{-1}$:     
twist along each $c_0$ and $c_1$ counterclockwisely (with respect to the boundary orientation) 
until  the endpoints of the seams 
are switched, thereby exchanging the visible and the invisible part 
of these circles. This proves that these two elements of $\Bd$ coincide. 
\end{proof}

\subsubsection{The group $\Bext$}\label{sectionBext}
We will show now that $\Bext$ is obtained from $\Bd$ by a process of passing to limit. 

\begin{proposition}\label{normal}
Every $g \in \Bext$ can be written as an infinite product:  
\[ g = f \cdot  \prod_{i=1}^{\infty} d_{c_i}^{p_i}\]
where:
\begin{enumerate}
\item $f \in \Bd$ acts quasi-rigidly outside some admissible  sub-surface $S\subset \surf$;
\item $c_i$ is a sequence of curves belonging to $E\cap (\surf-S)$ which goes to infinity, i.e.  
which eventually leaves any compact sub-surface of $\surf$ and $p_i\in\Z$. 
\end{enumerate}
\end{proposition}
Note that both the left hand side  and the infinite product in the right hand side 
are well-defined elements in the mapping  class group of (all homeomorphisms of) $\surf$. 
\begin{proof}
There exists a quasi-admissible sub-surface $S \subset \surf$,  with $g(S)$ quasi-admissible, such that $g$ 
preserves globally the pants decomposition $E\cap (\surf-S)$ of $\surf-S$, namely: 
$g(E\cap  (\surf-S))=E\cap (\surf-g(S))$. There is no loss of generality in assuming that 
$S$ contains the fundamental pair of pants. 
Denote by $f$ the element of $\Bd$ which acts as $g$ on $S$ and is extended quasi-rigidly on $\surf-S$. 
Note that $f$ might not belong to $\B$ since the boundary circles do not necessarily inherit the 
right decomposition into visible and invisible part from $g$. 
It follows that $f^{-1}g$ stabilizes every connected component of $\surf-S$. 

Let us stick for the moment to the restriction to one connected component. 
Let $U$ be a connected component of $\surf-S$ and $y\in \Bext$ be an  element of its stabilizer.
The unique boundary circle $c$ of $U$ has to be sent into itself by $y$, and its visible part is sent into its visible part. Let $c_0, c_1$ be the two other curves which occur along with $c$ 
as the boundary of the pair of pants $P_c$ in $E\cap U$. Since $y$ is the mapping class of a homeomorphism, 
the curves $y(c_0), y(c_1)$ and $y(c)=c$ belong to $E\cap U$ and bound a pair of pants of $E$. 
This means that  $y(P_c)=P_c$ and thus 
$\{y(c_0),y(c_1)\}=\{c_0,c_1\}$. Up to composition with $d_c$ we can therefore assume that 
$y(c_i)=c_i$, for $i=0,1$.

If the visible parts of $c_i$ and $y(c_i)$ agree then, up to composition 
with  some powers of $t_c$, $t_{c_0}$ and $t_{c_1}$ we can assume that the restriction 
of $y$ to $P_c$ is identity, namely it sends the seams into seams and the visible part into the visible part. 

If, moreover the visible part of $c_i$ and $y(c_i)$ disagree 
then we compose $y$ with $d_{c_i}$ and we reduce ourselves to the previous situation.  
In order to see this, recall that  $P_{c_i}$ denotes the  pair of pants of $E\cap U$ which is  
adjacent to $P_c$ along $c_i$.  Then the composition of $y$ and  those $d_{c_i}$ needed above  
is the mapping class  of a homeomorphism of  $P_c\cup P_{c_1}\cup P_{c_2}$ which is 
isotopic  rel boundary to  one whose restriction to $P_c$ sends the visible part of $c_i$ into the visible part of $c_i$ 
(see the figures used in the proof of Lemma \ref{pi}).

Note that $d_{c_0}$ and $d_{c_1}$ commute with each other.  
Therefore, in both cases, by composing $y$ with some element of the form 
$t_c^{-n}d_{c_0}^{-n_0}d_{c_1}^{-n_1}$  we can make the restriction of $y$ to $P_c$ to be  identity. 

Now, for any finite sequence $I$ with entries from $\{0,1\}$, we denote by 
$c_{I0}$ and $c_{I1}$ the two curves at unit distance from the 
curve $c_I$ (using the metric induced from the tree $\mathcal T$) which are farther from $c$ than $c_I$.  Let $\|I\|$ denote the number of entries of $I$. 
By recurrence on $\|I\|$,  for any $k$ there exists $n_i\in \Z$, for $i\leq k$, such that 
the restriction of $yt_c^{-n}\prod_{\|I\|=1}^kd_{c_{I}}^{-n_I}$ to the admissible sub-surface 
bounded by $c$ and all the curves $c_I$, where $\|I\|=k$,  is identity.  
It follows that we can write in $\Bext$: 
\[ y=t_c^n\prod_{\|I\|=1}^{\infty} d_{c_I}^{n_I}\]

Notice that $d_{c_I}$ and $d_{c_J}$ commute when $\|I\|=\|J\|$ and more generally, 
if $supp(d_{c_I})\cap supp(d_{c_J})=\emptyset$. 

We can now resume  the proof of the statement. Set  $U_1,U_2,\ldots,U_n$ for
the connected components of $\surf-S$, whose boundary circles are denotes by $a_j$. From above we can write:
\[ f^{-1}g|_{U_j}=t_{a_j}^{n_j}\prod_{\|I\|=1}^{\infty} d_{a_{j, I}}^{n_{j,I}}, \; {\rm for} \; 1\leq j\leq n\]
Since $c_j$ are disjoint and $S$ contains the fundamental pair of pants we have 
 $supp(d_{c_j})\cap supp(d_{c_k})=\emptyset$, for $j\neq k$. 
 This implies that:
 $supp(d_{c_{j,I}})\cap supp(d_{c_{k,J}})=\emptyset$, for $j\neq k$, because 
 $supp(d_{c_{j,I}})\subset supp(d_{c_j})$.  We obtain therefore the identity: 
 \[ g= f \prod_{j=1}^n t_{a_j}^{n_j} \prod_{\|I\|=1}^{\infty} \prod_{j=1}^nd_{a_{j, I}}^{n_{j,I}}\]
 which proves the claim. 
  \end{proof}

\subsubsection{The sub-groups $D$ and $\widehat{D}$}\label{DD}
The {\em sub-group  of standard  half-twists} $D\subset \Bd$ is the subgroup generated by the  half-twists $d_a$, where 
$a$ runs over the set of curves in $E$. Note that the extended supports $supp(d_a)$ are all contained in the 
complement of the fundamental pair of pants, so that  whenever 
$a,b\in E$ then $supp(d_a)$ and $supp(d_b)$ are either disjoint 
or else one of them is contained in the other.  In particular, 
although one might think that it is reasonable to call the element $\alpha^2d_a\alpha^2$ (where $\alpha$ is the 
element of $\B$ from picture \ref{alpha})
also a  standard half-twist, it does not belong to the sub-group of standard half-twists $D$. 

Let $\widehat D\subset\Bext$ denote the sub-group consisting of possibly infinite products 
\[ d= d_{a_1}d_{a_2}\cdots d_{a_n}\cdots \]
of the half-twists $d_{a_i}$, where $a_i$ is a sequence of curves  from $E$ which goes to infinity, i.e. 
which eventually leaves every compact sub-surface of $\surf$. 

We wish to emphasize the fact that the sub-groups $D$ and $\widehat{D}$ depend on the choice 
of a fundamental pair of pants, up to conjugacy, although their isomorphism type does not. 

Let  $Perm^3_{2^n}$  denote the group of those automorphisms of a rooted trivalent tree
of level  $n+1$ which fix the neighbours of the root, or equivalently, the automorphism group 
of three copies of the  rooted binary tree of level $n$. The action of $Perm^3_{2^n}$ on the set of $3\cdot 2^n$ 
boundary leaves is faithful.  Denote by 
$Perm^3_{\infty}$ the inductive limit $\lim_{n\to \infty} Perm^3_{2^n}$, where 
$Perm^3_{2^n}\to Perm^3_{2^{n+1}}$ is induced by the embedding of the corresponding rooted trees.  

\begin{proposition}\label{ddemitwists}
The abelian sub-group $D[2]\subset D$ generated by the twists $t_a=d_a^2$ along the 
curves $a$ in $E$ is a normal subgroup of $D$ which fits into the exact sequence:
\[ 1\to D[2] \to D \to Perm^3_{\infty}\to 1\]
\end{proposition}
\begin{proof}
The homomorphism $D \to Perm^3_{n}$ corresponds to the action of $D$ on the trivalent sub-tree $\mathcal T_n$ 
of $\mathcal T$ whose vertices are those curves in $E$ at distance  at most $n$ from the fundamental pair of pants $P$.
 Each element of $Perm^3_{n}$ is a product of transpositions. Here by transposition we mean 
the transformation exchanging two branches having a common vertex by means of a planar symmetry, so that the cyclic order of their leaves is reversed. Every transposition is the image of the half-twist along the curve in $E$ 
corresponding to the vertex, so that   $D \to Perm^3_{n}$ is surjective for every $n$ and hence also 
for $n=\infty$.  

We show further that $D[2]$ is a normal subgroup of $D$.
By direct inspection we find that:  
\[ d_c d_a = \left\{\begin{array}{ll}
d_a d_c, &  \; {\rm if } \; supp(d_a)\cap supp(d_c)=\emptyset\\
d_{d_c(a)} d_c, &   \; {\rm if } \; supp(d_a)\subset supp(d_c)\\
d_a d_{d_a^{-1}(c)}, & \; {\rm if } \; supp(d_c)\subset supp(d_a)\\
\end{array}
\right.
\]
We derive that:
\[ d_c d_a^2 d_c^{-1}= \left\{\begin{array}{ll}
d_a^2, &  \; {\rm if } \; supp(d_a)\cap supp(d_c)=\emptyset\\
d_{d_c(a)}^2, &   \; {\rm if } \; supp(d_a)\subset supp(d_c)\\
d_a^2, & \; {\rm if } \; supp(d_c)\subset supp(d_a)\\
\end{array}
\right.
\]
and our claim follows.

Let now $g\in \ker(D \to Perm^3_{n})$ be a product of half-twists along curves of $E\cap \Sigma_{0,3\cdot 2^n}$, where 
$\Sigma_{0,3\cdot 2^n}$ is the surface whose boundary circles are at distance $n$ from the fundamental 
pair of pants $P$. Since $g$ is a homeomorphism preserving $E$ and fixing the boundary of $P$, 
$g$ should send $\Sigma_{0,3\cdot 2^k}$ into itself, for every $k$. 
By induction on $k$, the action of $g$ on the set of boundary components of $\Sigma_{0,3\cdot 2^k}$ 
must be trivial, for any $k\geq 0$. The induction step follows from the fact that 
 $\Sigma_{3\cdot 2^{k+1}}-\Sigma_{3\cdot 2^{k}}$ is a union of disjoint 
 pair of pants. Then the restriction of $g$ to each such pair of pants either sends 
 the new level $k+1$ boundary components into themselves, or else it permutes them. 
But a non-trivial permutation of them induces a non-trivial element in $Perm^3_{\infty}$. 
This proves that $g$ keeps fixed any curve of $E$ and hence all pair of pants in $\Sigma_{0,3\cdot 2^n}$. Since the mapping class group of a pair of pants is the abelian group generated by the three boundary Dehn twists, it follows 
that $g$ belongs to  the normal subgroup of $D$ generated by the Dehn twists along curves of $E$, i.e. 
to $D[2]$. 
\end{proof}

\begin{proposition}\label{extdemitwists}
Set  $\widehat{D[2]}\subset\Bext$ for  the sub-group consisting of possibly infinite products 
\[ d= t_{a_1}t_{a_2}\cdots t_{a_n}\cdots \]
of  Dehn twists $t_{a_i}$, where $a_i$ is a sequence of curves  from $E$ which goes to infinity.
Then  $\widehat{D[2]}$ is a normal subgroup of $\widehat D$ which fits into the exact sequence:
\[ 1\to \widehat{D[2]} \to \widehat D \to Perm^3_{\infty}\to 1\]
where $Perm^3_{\infty}$ is as above. 
\end{proposition}
\begin{proof}
Note first that  any element $d\in \widehat{D}$  can be written as an infinite product of the form: 
\[ d= d_{a_1}^{p_1}d_{a_2}^{p_2}\cdots d_{a_n}^{p_n}\cdots \]
of powers of half-twists $d_{a_i}$ along curves from $E$, where $p_j\in \Z$, 
so that there exist $k_0\leq k_1\leq \cdots \leq k_n\leq \cdots$ with the properties:  
\begin{enumerate}
\item $a_{k_n}, a_{k_{n}+1}, \ldots a_{k_{n+1}-1}$ are curves of level $n$, for each $n$; 
\item there are no more than $3\cdot 2^n$ curves of level $n$, i.e. 
$k_{n+1}-k_n\leq 3\cdot 2^n$, for all $n$. 
\end{enumerate}
Then the proof given above for Proposition \ref{ddemitwists} works without essential modifications. 
We skip the details. 
\end{proof}

\begin{proposition}\label{Dstabilizer}
Let $g\in \Bext$ be an element such that $g(E)=E$ and $g$ fixes the fundamental pair of pants $P$, namely 
$g$ is the mapping class of a homeomorphism whose restriction to $P$ is identity. 
Then $g\in \widehat{D}$. In particular, if  additionally $g\in \Bd$,  then $g\in D$. 
\end{proposition}
\begin{proof}
The arguments are similar to those used in the second part of Proposition \ref{ddemitwists}. 
Since $g$ is a homeomorphism preserving $E$ and fixing the boundary of $P$, 
$g$ sends $\Sigma_{0,3\cdot 2^k}$ into itself, for every $k$. 
By induction on $k$, the action of $g$ on the set of boundary components of $\Sigma_{0,3\cdot 2^k}$ 
is the same as that of a product  $d_k$ of half-twists along curves of level $\leq k-1$, for any $k\geq 1$. 
The induction step follows from the fact that 
 $\Sigma_{3\cdot 2^{k+1}}-\Sigma_{3\cdot 2^{k}}$ is a union of disjoint 
 pair of pants. Then the restriction of $g$ to each such pair of pants either sends 
 the new level $k+1$ boundary components of into themselves, or else permutes them. 

Now, the element $d=d_1d_2\cdots d_k\cdots $ belongs to $\widehat{D}$ and the action of 
$d^{-1}g$ on the set of curves of $E$ is trivial.  Therefore, as above, $d^{-1}g\in \widehat{D[2]}$, so that 
$g\in \widehat{D}$, as claimed. 

The second assertion follows from Proposition \ref{ddemitwists}. 
\end{proof}

\section{The complex of decompositions of $\surf$}\label{decompositions}

\subsection{Geometric interpretation of 2-cells}
We say that two curves of a pants decomposition $F$ are \textit{adjacent} if they bound the same pair of pants in $F$. 

\begin{proposition}
 Let $F_1,F_2,F_3$ be three vertices of $\C$ such that for all $i \in {1,2}$, $F_i$ and $F_{i+1}$ are joined by an edge in $\C$. Then there exists a unique 2-cell in $\C$ containing $F_1,F_2,F_3$ as vertices. 
\end{proposition}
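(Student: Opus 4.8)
The plan is to reduce the statement to the local analysis of how the two consecutive elementary moves $F_1\to F_2$ and $F_2\to F_3$ interact. Write the first move as the replacement of a curve $\gamma_1\in F_1$ by $\gamma_1'\in F_2$ and the second as the replacement of $\gamma_2\in F_2$ by $\gamma_2'\in F_3$; these curves are recovered intrinsically from the symmetric differences, $F_1\triangle F_2=\{\gamma_1,\gamma_1'\}$ and $F_2\triangle F_3=\{\gamma_2,\gamma_2'\}$, so all the data below is determined by the triple $(F_1,F_2,F_3)$. To each move I attach its support: the four-holed sphere $P_1$ formed by the two pairs of pants of $F_2$ adjacent to $\gamma_1'$, and the four-holed sphere $P_2$ formed by the two pairs of pants of $F_2$ adjacent to $\gamma_2$. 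Both have their boundary curves in $F_2$, so each is assembled from pants of the single decomposition $F_2$.

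First I would set up the trichotomy that drives the proof: as subsurfaces built from pairs of pants of $F_2$, the supports $P_1$ and $P_2$ share either two, one, or zero of these pants, and I claim these cases correspond respectively to a triangle, a pentagon, and a square. If $P_1=P_2$ (two shared pants), both moves take place inside one four-holed sphere $\Sigma_{0,4}$ and $F_1,F_2,F_3$ span the triangular 2-cell supported there. If $P_1$ and $P_2$ share exactly one pair of pants, then $W=P_1\cup P_2$ is a five-holed sphere in which $\gamma_1'$ and $\gamma_2$ are the two interior curves of $F_2$ inside $W$ and are forced to bound that common pants; since the pants graph of $\Sigma_{0,5}$ is a single pentagon, $F_1,F_2,F_3$ appear as three consecutive vertices of that pentagonal cell. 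If $P_1$ and $P_2$ are disjoint, then $\gamma_2\in F_1$ as well, the two moves commute, and performing them in the opposite order yields a fourth vertex $F_4$ with $F_1\to F_4\to F_3$, closing the square $F_1F_2F_3F_4$. In each case existence reduces to checking that the prescribed cycle closes inside its local model ($\Sigma_{0,4}$ or $\Sigma_{0,5}$), which is exactly how the 2-cells of $\C$ were defined.

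For uniqueness I would show that any 2-cell $\sigma$ through $F_1,F_2,F_3$ must coincide with the one produced above. The support of $\sigma$ is forced: all vertices of $\sigma$ agree with $E$ outside $P_1\cup P_2$, and the two edges of $\sigma$ meeting $F_2$ are the moves along $\gamma_1'$ and along $\gamma_2$ (a diagonal of a square is not an edge of $\C$, since it would change two curves at once, so $F_1,F_2,F_3$ cannot sit as non-consecutive corners). Hence $\sigma$ lives in $P_1\cup P_2$ and realizes the two given moves as adjacent boundary edges; as a fixed $\Sigma_{0,4}$ carries a unique triangle through three mutually adjacent vertices and a fixed $\Sigma_{0,5}$ a unique pentagon, while the commuting square is pinned down by its two prescribed edges, $\sigma$ is unique. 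The main obstacle, I expect, is the overlap bookkeeping behind the trichotomy — above all the pentagonal case, where one must prove that supports meeting in a single pair of pants always produce the standard $\Sigma_{0,5}$ configuration (so that $\gamma_1'$ and $\gamma_2$ are linked, never disjoint) and that the resulting three vertices lie in no competing square or triangle; verifying that each prescribed cycle genuinely closes in its local model is the technical heart of the argument.
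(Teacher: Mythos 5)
Your overall strategy (classify the triple by how the supports of the two moves overlap, exhibit the cell in a local model, then force uniqueness through the support) is close in spirit to the paper's proof, which classifies by $d(F_1,F_3)$ and by adjacency of the moved curves in $F_2$. But your first case contains a genuine error. When $P_1=P_2$, i.e.\ the second move is applied to the very curve $\gamma_1'$ created by the first move, it does \emph{not} follow that $F_1,F_2,F_3$ span a triangle. Inside the common support $\Sigma_{0,4}$, isotopy classes of essential curves are parametrized by slopes in $\mathbb{Q}\cup\{\infty\}$, two decompositions are joined by an edge exactly when the slopes are Farey neighbours, and the triangular $2$-cells are the Farey triangles. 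Two consecutive Farey edges need not close up: take $\gamma_1=0/1$, $\gamma_1'=\gamma_2=1/0$, $\gamma_2'=2/1$; then $\gamma_1$ and $\gamma_2'$ have geometric intersection $4$, so $F_1$ and $F_3$ are not joined by an edge and no triangle contains the three vertices. Worse, in this configuration there is no $2$-cell of any kind: the two edges of a square are moves supported in disjoint four-holed spheres, and any three vertices of a pentagon carrying two edges of $\C$ must be consecutive, hence correspond to moves on two \emph{distinct} adjacent curves; neither can host a triple whose mutual differences all lie in a single $\Sigma_{0,4}$. So the implication ``$P_1=P_2\Rightarrow$ triangle'' is irreparably false, and this configuration is in fact a counterexample to the Proposition as literally stated.

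For comparison, the paper keys its triangle case to the hypothesis $d(F_1,F_3)=1$, which is correct as far as it goes, and then treats $d(F_1,F_3)=2$ only under the tacit assumption that the two moves occur in different four-holed spheres (it asks whether the two moved curves are adjacent in $F_2$ or have disjoint supports); the same-support, Farey-distance-two configuration above simply falls through the cracks. The difference is that the paper omits the bad case while you assert a false statement about it; a correct treatment has to isolate this configuration explicitly (and, downstream, the induction in Lemma \ref{lemuniciteauto} must be adjusted where it invokes a $2$-cell through $H_{n-1},H_n,G$). Two smaller points: your justification of the pentagonal case by ``the pants graph of $\Sigma_{0,5}$ is a single pentagon'' is also wrong --- that complex is infinite --- although the conclusion you want is true and is proved by reconstructing the cyclic chain of five curves determined by $\gamma_1,\gamma_1',\gamma_2,\gamma_2'$; and in your square case ``disjoint'' should mean ``sharing no pair of pants'', since the two supports may share boundary curves. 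Your square case and the forced-support uniqueness argument are otherwise sound and essentially match the paper's.
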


\begin{proof}
 There are three possible cases :
\begin{itemize}
 \item If $d(F_1,F_3)=1$, then $F_1,F_2,F_3$ are vertices of a triangular 2-cell;
 \item If $d(F_1,F_3)=2$, there are two possible cases. Let $m$ and $m'$ be the elementary moves along the curves $c$ and $c'$, which are represented by the edges $(F_1,F_2)$ and $(F_2,F_3)$, respectively;
 \begin{itemize}
  \item If $c$ and $c'$ are not adjacent in the decomposition $F_2$, the associated elementary moves have disjoint supports and they commute. Hence, $F_1,F_2,F_3$ belong to a unique squared 2-cell;
  \item If $c$ and $c'$  are adjacent, the associated elementary moves do not commute. Let $F_0$ be the decomposition obtained from $F_1$ by applying the elementary move $m'$ and $F_4$ the decomposition obtained from $F_3$ by applying $m$. Then $d(F_0,F_4)=1$. 
 \end{itemize}

\end{itemize}

\end{proof}

\subsection{Action of the mapping class group of the complex}\label{sectionactioninf}
\begin{lemma}\label{lemactioninjinf}
 The extended mapping class group $\Bext$ acts by automorphisms on the 
complex $\C$. This action is transitive on the set of vertices of $\C$. 
Moreover, the natural map $\Psi : \Bext \rightarrow \AC$ is injective.
\end{lemma}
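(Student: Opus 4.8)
The plan is to establish the three assertions of Lemma~\ref{lemactioninjinf} in turn: that $\Bext$ acts by automorphisms, that this action is transitive on vertices, and that the representation $\Psi$ is injective. For the first assertion, observe that any $x \in \Bext$ is an asymptotically quasi-rigid homeomorphism, so it maps simple closed curves to simple closed curves and preserves disjointness and isotopy; hence $x$ carries a pants decomposition to a pants decomposition. The point requiring care is that $x$ sends \emph{asymptotically trivial} decompositions to asymptotically trivial ones: if $F$ coincides with $E$ outside an admissible sub-surface $\Sn$, and $x$ is quasi-rigid outside some admissible $\Sigma_{0,m}$, then outside the admissible sub-surface containing both $\Sn$ and $\Sigma_{0,m}$ the image $x(F)$ coincides with $x(E)$, and since $x$ stabilizes $E$ outside its support, $x(F)$ agrees with $E$ there. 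Thus $x$ preserves the vertex set. Because $x$ is a homeomorphism it visibly preserves the geometric intersection number, hence sends an elementary move along a curve $c$ to an elementary move along $x(c)$; so edges go to edges. Finally, since the $2$-cells are determined intrinsically by the combinatorics of triangular, square, and pentagonal cycles (as made explicit in the preceding proposition), and $x$ respects these cycle types, it sends $2$-cells to $2$-cells. This shows $\Psi(x) \in \AC$.

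For transitivity on vertices, I would fix an arbitrary asymptotically trivial decomposition $F$ and produce an element of $\Bext$ carrying $E$ to $F$. By definition $F$ agrees with $E$ outside some admissible sub-surface $\Sn$, so $F$ and $E$ differ only by finitely many curves, all contained in $\Sn$. Both $E \cap \Sn$ and $F \cap \Sn$ are pants decompositions of the compact surface $\Sn$, so there is a homeomorphism of $\Sn$ (fixing $\partial \Sn$ pointwise) taking one to the other; using the embedding $\mathcal M(\Sn) \to \B$ described earlier via rigid extension, this yields an element of $\B \subset \Bext$ sending $E$ to $F$. Hence the action is transitive.

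The injectivity of $\Psi$ is the heart of the lemma and the main obstacle. The strategy is standard in rigidity arguments: suppose $x \in \Bext$ acts trivially on $\C$, i.e.\ fixes every vertex; I must show $x = 1$. Fixing every vertex means $x$ fixes every asymptotically trivial pants decomposition, and in particular every simple closed curve that occurs in such a decomposition, up to isotopy; since every simple closed curve on $\surf$ whose complement has the right topology can be completed to an asymptotically trivial decomposition, $x$ fixes the isotopy class of every such curve. The delicate part is passing from ``$x$ fixes all these isotopy classes'' to ``$x$ is isotopic to the identity.'' A homeomorphism fixing every curve of $E$ (setwise, and in fact the move to neighbouring decompositions pins down the curve exactly rather than swapping) restricts on each pair of pants of $E$ to a homeomorphism fixing each boundary component, and on a pair of pants the mapping class group fixing the boundary pointwise is generated by boundary Dehan twists; the extra curves obtained by elementary moves detect and kill these twists, so the restriction to each pair of pants is trivial. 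The subtle issue to handle carefully is the possibility that $x$ is an infinite product of half-twists: fixing all decompositions forces each half-twist exponent $p_i$ to vanish and forces the $\Bd$-part $f$ to be trivial, using the normal form $x = f \cdot \prod_i (d_{I_{a_i}^{n_i}})^{p_i}$ established in Section~\ref{sectionBext}.

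Assembling these, the only real work is the injectivity step, where one must argue that no nontrivial quasi-rigid mapping class can act trivially on the whole curve/pants data; I expect to use both the elementary-move structure (to detect twisting along a single curve) and the transitivity together with the normal form (to control the infinite half-twist tail). Once injectivity is in hand, the homomorphism property of $\Psi$ is automatic from the fact that composition of homeomorphisms induces composition of the associated automorphisms, so $\Psi$ is an injective homomorphism $\Bext \to \AC$ with transitive image on vertices, as claimed.
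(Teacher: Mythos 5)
Your verification that $\Bext$ acts by automorphisms is fine, and your observation that fixing two decompositions related by an elementary move pins down the moved curve is a correct and useful point. However, there are two genuine gaps. The first is in transitivity: you claim that since $F$ and $E$ coincide outside an admissible $\Sn$, ``there is a homeomorphism of $\Sn$ (fixing $\partial\Sn$ pointwise) taking one to the other.'' This step fails. Pants decompositions of a compact planar surface lie in different mapping class group orbits according to the isomorphism type of their dual trees: on $\Sigma_{0,6}$, one decomposition can have a linear (caterpillar) dual tree and another a tripod-shaped one, and then no homeomorphism of $\Sigma_{0,6}$ whatsoever carries one to the other; already on $\Sigma_{0,4}$, a homeomorphism fixing the four boundary components componentwise cannot change which pair of boundary curves the decomposition curve separates, whereas an elementary move does exactly that. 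So the desired element cannot in general be produced inside the fixed subsurface $\Sn$: one must allow $\Sn$ to be sent to a \emph{different} admissible subsurface $\Sn'$ whose trace $E\cap\Sn'$ has the matching dual tree, using the homogeneity of the infinite trivalent tree. This is precisely the content of Proposition 5.4 of Funar--Kapoudjian, which the paper cites for this part rather than reproving.

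The second gap is in injectivity, which you yourself flag as ``the only real work'': what you give is a program, not a proof. The decisive claim, that fixing every vertex ``forces each half-twist exponent $p_i$ to vanish and forces the $\Bd$-part $f$ to be trivial,'' is exactly what needs proving, and you defer it (``I expect to use\dots''). The paper closes this gap with an external input you never invoke, namely the injectivity part of Margalit's theorem: writing $x=f\cdot\prod_i (d_{c_i})^{p_i}$ with $f\in\Bd$, if $f$ moves the admissible subsurface $\Sn$ one exhibits a moved vertex by an elementary move on a boundary curve of $\Sn$; if $f(\Sn)=\Sn$, then $f$ restricts to a mapping class $\overline{f}$ of $\Sn$ and Margalit's theorem yields a pants decomposition $F_n$ of $\Sn$ with $\overline{f}\cdot F_n\neq F_n$, which one extends by $E$; and if $f$ is trivial, an elementary move on a curve $c_i$ with $p_i\neq 0$ is moved. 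Your alternative Alexander-method sketch could perhaps be pushed through, but as written its key assertions are not established, and one of them is misleading: ``the restriction to each pair of pants is trivial'' does not kill $x$, since Dehn twists and full powers of half-twists along curves of $E$ restrict trivially to every pair of pants (the twisting lives on the gluing curves, not inside the pants), so the whole burden falls on the unproved claim that the vertices $F_{c_k}$ detect all such twisting simultaneously with the $\Bd$-part. Either carry that argument out in full or quote Margalit, as the paper does.
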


\begin{proof}
The first part of this result is a weak version of Proposition 5.4 
of \cite{universalmcg}. We prove the injectivity part as follows. We take
an element $g \in \Bext$ and assume $g$ is non trivial. We want to show 
that we can find a vertex $F$ of $\C$ such that $g\cdot F\neq F$.

We first consider the special case where $g \in \Bd$. Let $\Sn$ be a 
quasi-admissible sub-surface of $\surf$ such that $n \geq 5$ and $g$ 
preserves the trace of  pants decompositions and seams on $\surf-\Sn$. 

 If $\Sn \neq g(\Sn)$, there is a curve $c$ of the standard 
decomposition $E$ such that $c$ is a boundary component of 
$\Sn$,  $g(c)$ is a  boundary component of $g(\Sn)$, $g(c)$ is still 
in $E$ but $g(c)$ is different from $c$. 
Let $F$ be a decomposition obtained from $E$ by 
an elementary move on $c$. Then $g \cdot F \neq F$. 

If $\Sn = g(\Sn)$, 
then the restriction of  $g$ to $\Sn$ is a non-trivial element 
of the mapping class of $\Sn$. 
The injectivity part of the main theorem of \cite{margalit} 
says that there exists a pants decomposition $F_n$ of $\Sn$ 
such that $g\cdot F_n \neq F_n$. Let $F$ be the decomposition 
of $\surf$ which coincide with $E$ outside $\Sn$ and with $F_n$ inside 
$\surf$. Then $g\cdot F \neq F$.

 Now, we turn to the general case $g \in \Bext$. We can write, following Proposition \ref{normal}: 
$$g=f \cdot \prod_{i=0}^{\infty} d_{c_i}^{p_i}  $$ 
where $f \in \Bd$ and the  sequence of curves $c_i$ belong to $E$ and eventually leave every compact sub-surface. 
If $f$ is non trivial, the previous case leads to the 
conclusion. If $f$ is trivial, then there exists 
$i \in \N$ such that $p_i \neq 0$. 
Let $F$ be a decomposition obtained from $E$ by an elementary move applied on 
the curve $c_i$. Then $g \cdot F \neq F$.

\end{proof}

\section{The automorphism group of $\C$}\label{sectionautomorphismes}
In this section, we introduce a graph whose vertices represent the  
neighbours of the vertex $E$ in $\C$. We will 
prove that the automorphism group of $\C$ acts 
naturally on this graph. We first consider the following general result.

\begin{lemma}\label{lemuniciteauto}
Let $\phi$ and $\phi'$ be two automorphisms of $\C$ such that 
for any decomposition $F$ joined by an edge to $E$ in $\C$, 
we have $\phi(F)=\phi'(F)$. Then $\phi=\phi'$.
\end{lemma}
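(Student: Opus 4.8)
The plan is to reduce to a single automorphism and then propagate fixedness outward from $E$ by induction on combinatorial distance, using the uniqueness of $2$-cells established in the previous Proposition. (I read the hypothesis as $\phi(F)=\phi'(F)$ for every $F$ joined to $E$ by an edge; the formula as printed seems to contain a typo.) First I would set $\psi=(\phi')^{-1}\circ\phi\in\AC$. For every neighbor $F$ of $E$ one has $\psi(F)=(\phi')^{-1}(\phi(F))=(\phi')^{-1}(\phi'(F))=F$, so $\psi$ fixes every neighbor of $E$, and it suffices to prove $\psi=\mathrm{id}$. Next I would check that $\psi(E)=E$: since $\psi$ is an automorphism, $\psi(E)$ is adjacent to $\psi(F)=F$ for every neighbor $F$ of $E$, hence is a common neighbor of all neighbors of $E$, and I claim $E$ is the only such vertex. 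Indeed, if $G\neq E$ then, both decompositions being asymptotically trivial, $G$ and $E$ differ only inside a fixed admissible sub-surface; choosing a curve $c\in E$ in the region where $G$ and $E$ agree and letting $F$ be the neighbor of $E$ obtained by the elementary move on $c$, the decomposition $F$ differs from $G$ both along $c$ and wherever $G$ already differed from $E$, so $d(F,G)\ge 2$ and $G$ is not adjacent to $F$. Thus $\psi(E)=E$.

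It remains to show $\psi$ fixes every vertex. Since $\C$ is connected it is enough to prove, by induction on $n$, that $\psi$ fixes every vertex at distance $\le n$ from $E$; the case $n\le 1$ is exactly the hypothesis together with $\psi(E)=E$. For the inductive step I take $G$ with $d(E,G)=n+1$ and choose a geodesic $E,\dots,G_{n-1},G_n,G$ with $d(E,G_n)=n$ and $d(E,G_{n-1})=n-1$, both already fixed. Applying the uniqueness Proposition to the length-two path $G,G_n,G_{n-1}$ yields a unique $2$-cell $D$ containing these three vertices. A triangle is impossible, since $d(G,G_{n-1})\ge 2$ by the triangle inequality, so $D$ is a square or a pentagon. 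In either case the vertex of $D$ adjacent to $G_{n-1}$ on the side away from $G_n$ is itself a neighbor of $G_{n-1}$, hence lies at distance $\le n$ from $E$ and is fixed; together with $G_n$ and $G_{n-1}$ it gives three consecutive vertices of $D$ all fixed by $\psi$. Invoking the Proposition a second time, these three consecutive vertices lie in a unique $2$-cell, which forces $\psi(D)=D$. Then $\psi$ restricts to a symmetry of the boundary cycle of $D$ fixing three consecutive vertices, and a dihedral symmetry of a square or a pentagon fixing three consecutive vertices is the identity; in particular $\psi(G)=G$. This closes the induction, so $\psi=\mathrm{id}$ and $\phi=\phi'$.

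The main obstacle is the inductive step, and precisely the need to exhibit, inside the reconstructing cell $D$, three consecutive vertices already known to be fixed: this is what upgrades ``$\psi(D)$ is some $2$-cell through the path'' to the rigid conclusion $\psi(D)=D$. The distance bookkeeping guaranteeing that the relevant extra vertex of $D$ (the fourth vertex of a square, or the vertex of a pentagon adjacent to $G_{n-1}$) lies within distance $n$ is the delicate point, together with the preliminary fact that $E$ is recognizable as the unique common neighbor of its neighbors, where asymptotic triviality is used in an essential way.
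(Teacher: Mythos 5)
Your proof is correct and takes essentially the same route as the paper's: induction on the combinatorial distance to $E$, using the uniqueness of the $2$-cell through a length-two path to propagate agreement outward, with a fourth vertex at distance $\leq n$ pinning down the image of the new vertex. The only real addition is your explicit argument that $E$ is the unique common neighbor of its neighbors (hence $\psi(E)=E$), a point the paper asserts without justification; your polygon-symmetry phrasing of the last step is equivalent to the paper's identification of $\phi(G)$ as the unique vertex of the $2$-cell adjacent to $\phi(H_n)$ and different from $\phi(H_{n-1})$.
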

\begin{proof}
 The proof is based on the geometric characterization of the 2-cells 
of $\C$. We denote by $d$ the combinatorial distance on the 1-skeleton 
of $\C$. We prove that $\phi(F) = \phi'(F)$ for any vertex $F$ of $\C$, 
by induction on the distance between $E$ and $F$. 
We have:
 
\begin{enumerate}
 \item First, $\phi(E)=\phi'(E)$.  Indeed, let $E,F$ and $F'$ be the vertices of a triangular 2-cell. 
 There exists precisely one more triangular 2-cell sharing two vertices with the former cell, say 
 of vertices $E,F$ and $ F''$.  By direct inspection the three vertices $F,F'$ and $F''$  uniquely determine the 
 fourth vertex of this configuration, namely any vertex $H$ such that 
 both $H,F,F'$ and $H,F,F''$ are triangular 2-cells of $\C$ coincides with $E$.  
 Therefore any automorphism of $\C$ which fixes $F,F'$ and $F''$ should also fix $E$. 
   \item For any $F$ at distance one from $E$, $\phi(F)=\phi'(F)$;
 \item By the induction hypothesis, assume that $\phi$ and $\phi'$ 
coincide on the ball of center $E$ and radius $n$ in $\C$, with $n \geq 1$. 
Let $G$ be any vertex at distance $n+1$ of $E$. We consider a path 
$p=H_0=E,H_1,...,H_{n-1},H_n,G$ of length $n+1$ joining $E$ to $G$. The 
vertices $H_{n-1},H_n,G$ define a unique 2-cell of $\C$. 
This 2-cell contains at least one fourth vertex $G'$ at 
distance less than or equal 
to $n$ from $E$. Indeed otherwise this 2-cell were triangular, 
so $G$ would be at distance less or equal to $n$ from $E$, 
contradicting our assumptions. Hence, $\phi(G)$ is necessary the 
vertex of the unique 2-cell of $\C$ defined by $\phi(G'),\phi(H_{n-1}),
\phi(H_n)$ which is joined by an edge to $\phi(H_n)$ and different from  
$\phi(H_{n-1})$. By uniqueness, $\phi(G)=\phi'(G)$.
\end{enumerate}

\end{proof}

\subsection{The link of a decomposition}\label{sectionlinkinf}
For any vertex $F$ of $\C$, we define the {\em link}  
$\LF$ to be a graph whose set of vertices consists of those vertices in 
$\C$ which are adjacent to $F$. 
Then two vertices are connected  within  $\LF$  by an {\em (A)-edge} when 
they lie together with $F$ in the same pentagonal 2-cell of $\C$, 
and by a {\em (B)-edge} when they belong to the same 
triangular 2-cell in $\C$. 
By the geometric interpretation of 2-cells in $\C$, this bi-colored 
link graph is well defined.

Further, denote  by $\LFA$ the 
graph obtained from $\LF$ by collapsing each (B)-edge (along with 
its endpoints) to a vertex. 

Let $\Sn$ be an $n$-holed sphere and $F_n$ a pants 
decomposition of $\Sn$, viewed as vertex of the 
complex $\mathcal C_{\mathcal P}(\Sn)$. 
Then one defines the {\em restricted link} $\LFn$ as above, but using only the 
neighbours in $\mathcal C_{\mathcal P}(\Sn)$.

Now, for every vertex $F$ of $\C$ and any curve $c$ in $F$, 
we denote by $V_c(F)$ the subset of those vertices of $\LF$ 
corresponding to the decompositions obtained  
by a single elementary move on $c$. Thus, the set of vertices of 
$\LF$ is the disjoint union of all 
$V_c(F)$, where $c$ runs through the set of curves of $F$. 

We obtain directly from the definitions the following lemma :

\begin{lemma}
 Let $\phi$ be any automorphism of $\C$. Then $\phi$ induces an 
isomorphism $\phi_{*,F} : \LE \rightarrow \LF$ where $F = \phi(E)$. 
Moreover, $\phi_{*,F} $ preserves the (A)- and (B)-edge types.
\end{lemma}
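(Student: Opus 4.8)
The plan is to unpack the definitions and track how an automorphism $\phi$ of $\C$ interacts with the local structure around a vertex. The statement claims that $\phi$ restricts to an isomorphism $\phi_{*,F}\colon\LE\to\LF$ with $F=\phi(E)$, preserving the two edge colors, so the heart of the matter is simply that $\phi$, being a graph automorphism of $\C$, sends the combinatorial neighborhood of $E$ bijectively to that of $F=\phi(E)$, and respects the auxiliary structure by which (A)- and (B)-edges are \emph{defined}.

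First I would recall that the vertices of $\LE$ are, by definition, exactly the vertices of $\C$ adjacent to $E$, that is, the endpoints of edges incident to $E$. Since $\phi$ is an automorphism of the $1$-skeleton of $\C$, it carries edges to edges and preserves incidence; hence it sends the star of $E$ bijectively onto the star of $\phi(E)=F$. Restricting $\phi$ to these neighbor-vertices therefore yields a bijection from the vertex set of $\LE$ to that of $\LF$, which is the underlying set map of the claimed $\phi_{*,F}$. This step is purely formal and uses only that $\phi$ is a graph automorphism.

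Next I would verify that $\phi_{*,F}$ is a graph isomorphism respecting the colors. Recall that two neighbors $G,G'$ of $E$ are joined by an (A)-edge precisely when $G,G'$ lie in a common pentagonal $2$-cell of $\C$, and by a (B)-edge precisely when they lie in a common triangular $2$-cell. The key point is that $\phi$, as an automorphism of the full cell complex $\C$ (not merely its $1$-skeleton), permutes the $2$-cells and preserves their combinatorial type: triangles go to triangles and pentagons go to pentagons, since the cell-type is determined by the number of boundary vertices, which $\phi$ preserves. Thus if $G,G'$ are connected by an (A)-edge in $\LE$, witnessed by a pentagonal $2$-cell containing $E,G,G'$, then $\phi$ carries that pentagon to a pentagonal $2$-cell containing $F,\phi(G),\phi(G')$, so $\phi(G),\phi(G')$ are (A)-adjacent in $\LF$; the same argument with triangles handles (B)-edges, and applying it to $\phi^{-1}$ gives the converse, so $\phi_{*,F}$ is a color-preserving isomorphism.

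The only point requiring a little care—and what I expect to be the main (though mild) obstacle—is justifying that an automorphism of $\C$ genuinely acts on $2$-cells and preserves their type, rather than just on the $1$-skeleton. Here I would lean on the Proposition proved just above in the excerpt, which gives a purely combinatorial characterization of the $2$-cells of $\C$ in terms of distances and adjacency among vertices: a triple of consecutive edges determines a unique $2$-cell, with its type (triangular, square, or pentagonal) read off from the combinatorial distances $d(F_1,F_3)$ and the adjacency of the swapped curves. Since $\phi$ preserves the combinatorial distance $d$ on the $1$-skeleton and sends edges to edges, it preserves these distances and hence the induced cell-type, so the action on $2$-cells is well defined and type-preserving. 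With that established, both the isomorphism property and the color-preservation of $\phi_{*,F}$ follow directly from the definitions, and the lemma is obtained.
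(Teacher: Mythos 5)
Your proof is correct and matches the paper's approach: the paper gives no written proof at all, asserting the lemma follows ``directly from the definitions,'' and your argument is exactly the unpacking of that assertion --- an automorphism of $\C$ carries the star of $E$ to the star of $F=\phi(E)$ and permutes $2$-cells preserving their combinatorial type, hence preserves (A)- and (B)-edges. Your extra care in invoking the preceding Proposition (the combinatorial characterization of $2$-cells) to justify that the action on $2$-cells is type-preserving is a sensible addition, not a deviation.
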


In particular, this lemma says that any link $\LF$ is isomorphic to $\LE$.

\subsection{Structure of $\LE$}\label{sectionstructurelink}
We give a description of the graph $\LE$ based on the geometric 
characterization of 2-cells in $\C$. 

\subsubsection*{(A)-edges}

Given two curves $c$ and $c'$ in $E$ and any two distinct vertices 
$a \in V_c(E)$ and $b \in V_{c'}(E)$, there exists an (A)-edge 
joining the vertices $a$ and $b$ in $\LE$ if and only if $c$ and $c'$ 
are {\em adjacent} in $E$, that is $c$ and $c'$ 
bound the same pair of pants in $E$.

\subsubsection*{(B)-edges}

Given two curves $c$ and $c'$ in $E$ and two distinct vertices 
$a \in V_c(E)$ and $b \in V_{c'}(E)$, if there exists a 
(B)-edge joining the vertices $a$ and $b$ in $\LE$ then $c=c'$.

\subsubsection*{Structure of $V_c(E)$}

Let $c$ be any curve in $E$. We apply an elementary move on $c$ 
such that the resulting curve $c_0$ has only one component in the 
visible side of $\surf$. Denote by $F_{c_0}$ the decomposition obtained 
from $E$ by this elementary move. 

Any other curve intersecting $c$ twice and which is disjoint 
from the other curves in $E$ can be obtained from 
$c_0$ by applying $k$ half-twists along $c$, where 
$k$ is some non-zero integer. 
We denote by $c_k$ the curve obtained from $k$ 
half-twists along $c$ on $c_0$ and $F_{c_k}$ the resulting decomposition 
of $\surf$. Then we have a canonical identification:  
\[ V_c(E)=\{F_{c_k}, k \in \Z\}.\] 
Furthermore,  $F_{c_k}$ and $F_{c_{m}}$ are joined by an edge 
(which is necessarily of $B$-type) if and only if $|k-m|=1$.

We remark that a vertex in $V_c(E)$ represents a homotopy class of a 
non-trivial curve in $\Sigma_{0,4}$ and that two vertices in 
$V_c(E)$ are joined by an edge if the associated 
curves on $\Sigma_{0,4}$ have geometric intersection equal to 2. 
This graph has been considered before, in relation with the curve complex 
of $\Sigma_{0,4}$. It was already known to Max  Dehn that this graph is 
isomorphic to the Farey graph $\tau_*$ (see \cite{De} and \cite{luo}, section 3.2).  

Note that $V_c(F)$ has a similar structure, for any $F$. By choosing   
$g\in \Bd$ such that $F=g(E)$, we derive an identification 
of $V_c(F)$ with $V_{g^{-1}(c)}(E)$. This identification is not unique, as 
we can alter $g$ by an element of $D$.

\subsection{Structure of $\LEA$}
The image of $V_c(E)$ by the collapsing map 
$\LE\to \LEA$  is a single point. Thus it will make sense to speak about 
the vertex $V_c(E)$ of $\LEA$. This provides a one-to-one 
correspondence between the set of vertices of $\LEA$ and the 
set of curves in $E$. Moreover, two vertices of $\LEA$ are connected by 
an edge if and only if they represent two adjacent curves in $E$.
Thus $\LEA$ consists of triangles indexed by the pair of pants occurring in $E$. 
We will often consider the dual of $\LEA$, whose vertices correspond to pair of pants in $E$ 
and edges to curves in $E$, which is therefore combinatorially isomorphic to the binary tree $\mathcal T$.

\subsection{Action of half-twists on the links}\label{sectionactionD}

Let $d_c$ be a half-twist whose support is a pair of pants bounded by 
the curves $a,b,c$ of $E$, such that $d$ permutes $a$ and $b$. Then 
$d_c$ acts on $\LEA$ by reversing the cyclic order of the vertices of the 
triangle associated to the support of $d_c$ in $\LEA$ and fixing the vertex 
$V_c(E)$. Moreover $d_c$ acts on $\LE$ and preserves the type of edges in $\LE$. 
Hence, $d_c$ acts on the union of the sets $V_{c'}(E)$ over the curves $c'$ 
of $E$. We distinguish three cases in the description of  
the action on  $V_{c'}(E)$, 
according to whether $c'$ is adjacent to one 
of the curves $a,b,c$ or not :

\begin{enumerate}

 \item Let $c'$ be a curve from $E$ contained in $supp(d_c)$ and different from $c$. 
 The half-twist $d_c$ sends $c'$ into a different curve $c''$ of $E$. 
 The induced map $V_{c'}(E)\to V_{c''}(E)$ is given by 
$F_{c'_k} \mapsto F_{c''_{k}}$,  for $k \in \Z$.

 \item Further $V_c(E)$ is stable by $d_c$ and $d_c$ acts on this set by the 
map $F_{c_k} \mapsto F_{c_{k+1}}$, $k \in \Z$.

 \item If $c'$ is a curve belonging to the component of $\surf \setminus\{c\}$ 
which does not contain $a$ and $b$, then $d_c$ fixes 
$V_{c'}(E)$ point-wise.

\end{enumerate}

\begin{figure}[ht!]
\begin{center}
\includegraphics[width=0.7\textwidth]{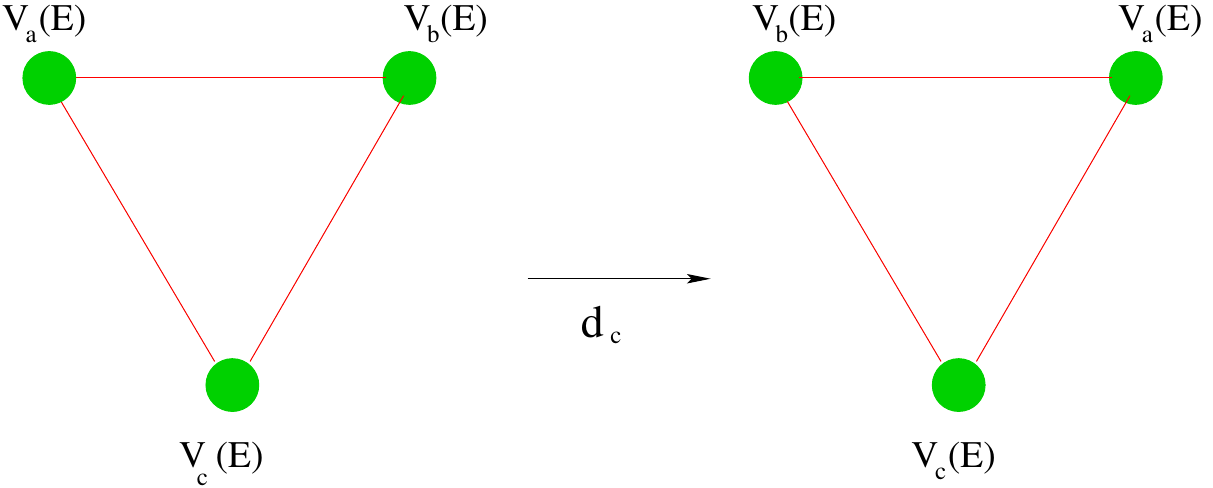}
\caption{\label{demitwistlink}Action of the half-twist $d$ on the link $\LEA$}
\end{center}
\end{figure}

\subsection{Action of $\T$ on the links}\label{sectionactionT}

  Let $t$ be an element of the Thompson group $\T$. Then $t$ induces 
an isomorphism $t_{*,F} : \LE \rightarrow \LF$, where $F = t(E)$. If $c$ 
is a curve of $E$, there exists a curve $c'$ of $F$ such that 
$t_{*,F}(V_c(E))=V_{c'}(F)$.  The mapping class $t$ preserves the visible 
face of $\surf$ from Proposition \ref{propvisibleT}. For any 
$k \in \Z$, we have  then $t_{*,F}(F_{c_k})=F_{c'_k}$.

\subsection{Characterization of automorphisms of $\LE$}\label{sectioncarac}

We deduce some conditions for an 
isomorphism $\LE \rightarrow \LE$ to be induced by an element of $\Bext$.

\begin{lemma}\label{lemcarac}
Let $f \in \Bext$ such that  
$\phi(E)=E$, where  $\phi=\Psi(f) \in \AC$. Let $c$ be a curve in $E$ and $c'$ 
be the curve of $E$ such that $\phi(V_c(E))=V_{c'}(E)$. Then there 
exists $p \in \Z$, such that for all $k \in \Z$, 
we have $\phi(F_{c_k})=F_{c'_{\varepsilon(f)k + p}}$, where 
$\varepsilon(f)=1$, if $f$ preserves the orientation and $\varepsilon(f)=-1$, otherwise. 
 \end{lemma}
\begin{proof}
Assume we choose a set of identifications $\alpha_{c,F}:V_c(F)\to \Z$, for all $F$ and 
$c\in F$, which coincides with the  canonical identification above for $V_c(E)$.
Set ${\rm Perm}(\Z)$ for the group of bijections of $\Z$.  
We have a map $\mu_{c,F} : \Bext \rightarrow \rm{Perm}(\Z)$ 
defined by $\mu_{c,F}(g)(k)= \alpha_{g(c),g(F)}\circ g_*\circ \alpha_{c,F}^{-1} (k)$, 
where $k\in \Z$, $c\in F$ and $g_*:V_c(F)\to V_{g(c)}(g(F))$ is the map induced by $g\in \Bext$. 
This map is a  twisted group homomorphism, in the sense that 
$\mu_{c, g\circ f(F)}(g\circ f)=\mu_{f(c),f(F)}(g)\circ \mu_{c,F}(f)$, if $f, g\in \Bext$. 
We set $\mu_c(f)=\mu_{c,E}(f)$, when $c\in E$ and $f(E)=E$. 
Note that $\mu_c(f)$ is independent on the choice of $\alpha_{c,F}$ extending the canonical identification, if $f(E)=E$, and  we have $\phi(F_{c_k})=F_{c'_{\mu_c(f)(k)}}$, where $\phi=\Psi(f)$.  

From section \ref{sectionactionT}, we have $\mu_{c,E}(t)=id_\Z$ for any 
$t \in \T$. From section \ref{sectionactionD} 
if $d$ is a half-twist then 
$\mu_c(d)$ is either the identity $id_{\Z}$, 
or the translation $\mu_c(d)(k )= k+1$. Also $\mu_c(i_R)(k)=-k$. 
Since any element of $\Bext$ is a product of an element of $T$ along with 
infinitely many half-twists accumulating at infinity, $\mu_c(f)$ has the required form when 
$f(E)=E$.  
 \end{proof}

\begin{lemma}\label{even}   
Let $f \in \Bext$ such that  
$\phi(E)=E$, where  $\phi=\Psi(f) \in \AC$.  
Let $P'$  and $P''$ be two almost admissible pairs of pants which are 
bounded by the curves $a,a',a''$ and $a,b,b'$, respectively. 
We assume that:
\begin{enumerate}
\item $a$ is closer to the fundamental pair of pants than $a'$ and $a''$;
\item the vertices $V_a(E),V_{a'}(E), V_{a''}(E)$ and $V_{b}(E)$  of $\LEA$ are point-wise fixed by $\phi$.
\end{enumerate}
Then $\mu_a(f)(k)=\varepsilon(f)k+p$,  where $p$ is even.
\end{lemma}
\begin{proof}
We reduce ourselves to the case when $f$ preserves the orientation, by composing if needed with the symmetry $i_R$.  
Choose an orientation preserving  rigid mapping class $h$ such that   $h(P'')=P$ and 
$h(a)$ is a boundary curve of the fundamental pair of pants $P$, whereas $h(a')$ and $h(a'')$ are not.
Then $\widetilde{f}=h\circ f\circ h^{-1}$ still fulfills $\widetilde{f}(E)=E$. 
Since two vertices of the triangle $V_{a}(E), V_{b}(E), V_{b'}(E)$ of $\LEA$ are fixed by $\phi$, the third one will 
also be fixed, i.e. $\phi(V_{b'}(E))=V_{b'}(E)$.   This implies that $f(b)=b$ and $f(b')=b'$, so that $\widetilde{f}$ preserves each boundary component of $P$.  
One can compose $\widetilde{f}$ with a product $g$ of Dehn twists along closed curves in $P$ and 
possibly half-twists along $h(b)$ and $h(b')$  
such that $\widetilde{f}\circ g$ is  identity on $P$. Hence $\widetilde{f}\circ g\in \widehat{D}$, by 
Proposition \ref{Dstabilizer}.  Now, the triangle  with vertices 
$V_{h(a)}(E)$, $V_{h(a')}(E)$ and  $V_{h(a'')}(E)$ is pointwise fixed by $\Psi(\widetilde{f}\circ g)$. It 
follows that in the expression of $\widetilde{f}\circ g$, and hence of $\widetilde{f}$,  as a  product of 
half-twists along curves in $E$ going to infinity the half-twist along $h(a)$ occurs with an even exponent.  
The claim is a consequence of  the fact that $\mu_{h(a)}(h\circ f\circ h^{-1})=
\mu_a(h)\circ\mu_a(f)\circ (\mu_a(h))^{-1}$. 
\end{proof}

\section{Compact sub-surfaces}\label{cascompact}
Observe first, that for all $n \geq 5$ and any  admissible sub-surface $\Sn$ 
of $\surf$ the inclusion  $\Sn\subset \surf$ induces an embedding 
of $\mathcal{C_P}(\Sn)$ into $\C$.
In this section, we will show that, for any automorphism $\phi$ of $\C$ 
there exists $n \geq 5$, admissible sub-surfaces $\Sn$ 
and $\Sn'$ of $\surf$ and an induced isomorphism between the pants complexes 
\[\phi_n : \mathcal{C_P}(\Sn) \rightarrow \mathcal{C_P}(\Sn'),\] 
in the sense that the restriction of $\phi$ to the 
sub-complex $\mathcal{C_P}(\Sn)\subset \C$ coincides with $\phi_n$.

\subsection{Definition of the induced isomorphism}\label{induced}

Let $\phi$ be an automorphism of $\C$ and set $F=\phi(E)$. 
There exists an admissible sub-surface $\Sn '$ of level $n\geq 5$ such 
that $F$ coincides with $E$ outside $\Sn '$. 
The decomposition $F$ induces a pants decomposition $F_n'$ of $\Sn'$.

\begin{description}
 \item[Definition of $\Sn$]
\end{description}

Consider the set $W'$ of vertices of $\LFA$ of the form $V_{c'}(F)$, for all 
curves $c'\in F_n'$. The set $W'$ spans a connected tree in  the dual tree of $\LFA$,  
which is isomorphic to the dual tree of $F_n'$. The automorphism 
$\phi^{-1}$ induces an isomorphism $\phi^{-1}_{*,F}$ from  $\LFA$ 
to $\LEA$.  
Therefore $W=\phi^{-1}_{*,F}(W')$ is a set of vertices of $\LEA$ of 
the form $V_c(E)$, where $c$ belongs to some finite subset $E_n$ 
of curves of $E$. Since $W$ also spans a connected subtree of the dual of $\LFA$, 
the curves of $E_n$ form a pants decomposition of a
connected  admissible sub-surface $\Sn$ of $\surf$.  

\begin{figure}[ht!]
\begin{center}
\includegraphics[width=0.7\textwidth]{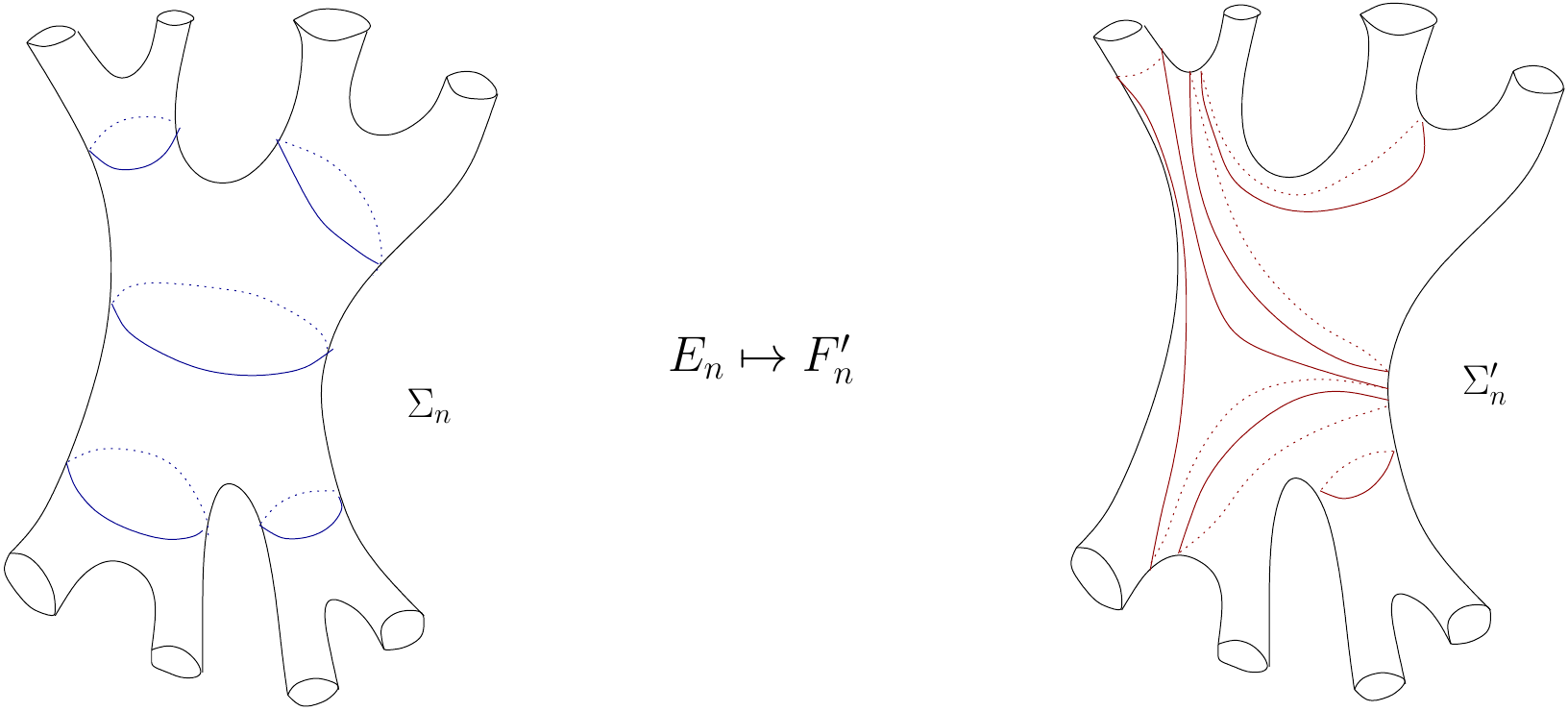}
\caption{\label{sinf3}The isomorphism $\mathcal{C_P}(\Sn) \rightarrow \mathcal{C_P}(\Sn')$}
\end{center}

\end{figure}

\begin{description}
 \item[The isomorphism $\phi_n:\mathcal{C_P}(\Sn) \rightarrow \mathcal{C_P}(\Sn')$]
\end{description}




Let $G$ be a decomposition adjacent to the  
canonical decomposition $E$ in $\C$. Assume that this decomposition is 
issued from an elementary move on a curve belonging to the interior 
of the sub-surface $\Sn$. 
Then $\Sn$ contains a support of $G$. Moreover, by construction of $\Sn$, 
$\phi(G)$ is issued from an elementary move in $F$ on a curve of $\Sn'$ 
and induces a pants decomposition of $\Sn'$. 

By induction on the number of moves, the image by $\phi$ of any decomposition $G$ of $\surf$ 
obtained by applying finitely many elementary moves on 
curves  lying in the interior of $\Sn$ is a 
decomposition of $\surf$ whose support is included in $\Sn'$. In other 
words, $\phi(G)\cap \Sn'$ is a well-defined vertex of $\mathcal{C_P}(\Sn')$. 
It remains to note that the pants decomposition of $\Sn'$ 
is obtained by finitely many elementary moves on the 
decomposition of $\Sn$. This correspondence  provides a cellular isomorphism 
$\phi_n:\mathcal{C_P}(\Sn) \rightarrow \mathcal{C_P}(\Sn')$. 

\begin{lemma}\label{lemcompactn}
 Let $\phi$ be an element of $\AC$ and $\Sn$, with $n\geq 5$, be a sub-surface as above. There exists an 
element $g \in \Bd$ such that $\phi$ and $\Psi(g) \in \AC$ coincide 
on the restriction of the link $\LE$ to vertices of $\mathcal C_{\mathcal P}(\Sn)$. 
\end{lemma}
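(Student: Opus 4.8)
The plan is to reduce the statement to the rigidity of the pants complex of a compact surface and then to realise the resulting mapping class by an element of $\Bd$. First I would feed $\phi$ into the construction of Section \ref{cascompact}: it produces an integer $n\geq 5$, two admissible sub-surfaces $\Sn,\Sn'\subset\surf$, and an isomorphism of pants complexes $\phi_n:\mathcal{C_P}(\Sn)\to\mathcal{C_P}(\Sn')$ that agrees with the restriction of $\phi$ to the embedded sub-complex $\mathcal{C_P}(\Sn)\subset\C$. Since the vertices of $\LE$ lying in $\mathcal{C_P}(\Sn)$ are exactly the $V_c(E)$ with $c$ an interior curve of $\Sn$ (those neighbours of $E$ obtained by an elementary move supported inside $\Sn$), it suffices to produce $g\in\Bd$ whose action $\Psi(g)$ coincides with $\phi_n$ on these vertices; realising the whole of $\phi_n$ is more than enough.

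Next I would invoke Margalit's rigidity theorem \cite{margalit}. Both $\Sn$ and $\Sn'$ are $n$-holed spheres with $n\geq 5$, so their pants complexes lie in the range where every isomorphism is geometric: composing $\phi_n$ with a fixed standard homeomorphism $\Sn'\to\Sn$ yields an automorphism of $\mathcal{C_P}(\Sn)$ induced by a mapping class, and unwinding the composition shows that $\phi_n$ itself is induced by a homeomorphism $h:\Sn\to\Sn'$ carrying boundary curves, all of which lie in $E$, to boundary curves. If $h$ reverses orientation I would correct it by composing with the finite product $\prod_{c\in E_n}d_c$ of half-twists supported in $\Sn$, the local version of the symmetry $i_R$; this product lies in $\Bd$ and its effect on the link is to reverse the cyclic order of every triangle of $\LEA$ inside $\Sn$, which is precisely the combinatorial signature of orientation reversal. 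I do not use the global symmetry $i_R$ itself, since it is only an infinite product and hence not an element of $\Bd$.

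It then remains to extend $h$ over $\surf\setminus\Sn$ rigidly and to check that the resulting isotopy class $g$ lies in $\Bd$, which is the heart of the matter. The mapping class represented by $h$ factors as a transport of $\Sn$ onto $\Sn'$ followed by an element of $\mathcal{M}(\Sn)$. The latter is generated by Dehn twists along interior curves and by half-twists exchanging boundary components; all the curves involved belong to $E$, the Dehn twists are the squares $d_c^2$, and the boundary half-twists decompose through the adjacent triples $d_{a,b,c}$, so this factor already lies in $\Bd$. The transport of the admissible surface $\Sn$ onto $\Sn'$ is a rearrangement of the dual binary tree, realised by an element of the Thompson group $T$ and hence by a product of the generators $\alpha,\beta$ of $\Bd$. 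Assembling these factors gives $g\in\Bd$, and by construction $\Psi(g)$ agrees with $\phi$ on all of $\mathcal{C_P}(\Sn)$, in particular on the vertices of $\LE$ lying in $\mathcal{C_P}(\Sn)$. The main obstacle is exactly this last verification: one must confirm that every curve entering the generators of $\mathcal{M}(\Sn)$ is a curve of $E$ and that the transport $\Sn\to\Sn'$ is a genuine finite Thompson move, so that the rigid extension stays in $\Bd$ and no infinite product of half-twists accumulating at infinity is forced upon us.
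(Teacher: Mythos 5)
Your proposal follows the same skeleton as the paper's proof: take $n$, $\Sn$, $\Sn'$ and the induced isomorphism $\phi_n:\mathcal{C_P}(\Sn)\to\mathcal{C_P}(\Sn')$ from Section \ref{cascompact}, realize it via Margalit's theorem by a homeomorphism $\Sn\to\Sn'$, and extend rigidly to $\surf$ to get $g$. The genuine gap is your treatment of the orientation-reversing case. First, at the literal level, composing an orientation-reversing $h$ with the product $\prod_{c\in E_n}d_c$ cannot ``correct'' it: half-twists are orientation-preserving, so the composite is still orientation-reversing and its rigid extension still fails to lie in $\Bd$. Second, under the charitable reading (replace the orientation reversal by $\prod_{c\in E_n}d_c$ because they induce the same link automorphism), the claim is false: the agreement you invoke holds only in $\LEA$, which forgets the $\Z$-coordinates of the sets $V_c(E)$, and it is exactly there that the two differ. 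An orientation-reversing mapping class $h$ conjugates every half-twist $d_c$ to $d_{h(c)}^{-1}$, hence sends $F_{c_k}\mapsto F_{h(c)_{-k+{\rm const}}}$, i.e.\ carries sign $-1$ on \emph{every} $V_c(E)$. By contrast, in the product $\prod_{c\in E_n}d_c$ the sign flips of nested half-twists cancel in pairs: for curves $c\supset b\supset e$ (each a child of the previous), $d_e$ acts on $V_e(E)$ by translation, $d_b$ sends it to $V_{e'}(E)$ with sign $-1$ (case 1 of Section \ref{sectionactionD}), and $d_c$ applies a further sign $-1$ (case 3), so the net sign on $V_e(E)$ is $+1$. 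The product therefore acts with sign $(-1)^{\mathrm{depth}}$, not $-1$ everywhere, and moreover it acts on the dual tree as the mirror automorphism rather than fixing the vertices $V_c(E)$.

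This case is not vacuous: the reflection of $\surf$ that fixes every curve of $E$ and exchanges the visible and invisible sides induces an automorphism $\phi$ of $\C$ whose restriction to $\mathcal{C_P}(\Sn)$ is induced by an orientation-reversing mapping class; for this $\phi$ your candidate $g$ disagrees with $\phi$ already on the vertices of $\LEA$. What is actually needed to flip all signs simultaneously is a half-twist along a curve $c^{\#}$ \emph{outside} $\Sn$ whose support contains all of $\Sn$ (this flips the sign on every interior $V_c(E)$ at once), composed with an asymptotically rigid element of $\B$ carrying the mirror image of $\Sn$ back; half-twists supported inside $\Sn$ cannot achieve this. In fairness, the paper's own proof is silent on this point (it extends the Margalit class ``rigidly'' without discussing orientation), so you identified a real issue, but your fix does not work. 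Two further inaccuracies are repairable: $\mathcal{M}(\Sn)$ is \emph{not} generated by Dehn twists along curves of $E$ and boundary half-twists (twists along curves crossing the pants decomposition are needed), and the transport $\Sn\to\Sn'$ need not lie in $T$ (it may reverse the planar structure of the tree). Both are harmless in the orientation-preserving case, since the rigid extension of any class of $\mathcal{M}(\Sn)$ and any transport lies in $\B$, and $\B\subseteq\Bd$ because $t_c=d_c^2$ and $\pi=d_ad_b^{-1}d_c^{-1}$.
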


\begin{proof}
Let  $E_n'$ denote the trace of the canonical decomposition on $\Sn'$. 
The dual trees of $E_n$ and of $E_n'$ are not necessarily isomorphic, 
in general. Let $\partial E_n$ and $\partial F_n'$ 
denote the set of those curves in $E_n$ and $F_n'$ lying in the boundary of  
$\Sn$ and $\Sn'$, respectively. The map $\phi_*$ at the level 
of $\LEA$ induced an isomorphism between the dual trees of $E_n$ and 
$F_n'$, sending therefore  leaves onto leaves and thus 
 $\partial E_n$ onto $\partial F_n'$. We can choose then a homotopy class of a 
homeomorphism $f$ sending $\Sn$ on $\Sn'$ such that 
the restriction of $f$ to $\partial E_n$ acts combinatorially  as 
$\phi_*$. This homeomorphism induces an isomorphism between the pants complexes 
$\overline{f} : \mathcal{C_P}(\Sn) \rightarrow \mathcal{C_P}(\Sn')$. 
By left-composing $\phi_n$ by ${\overline{f}}^{-1}$, 
we define an automorphism $\phi_{n,f}$ of   $\mathcal{C_P}(\Sn)$.

Now, Margalit's rigidity theorem (\cite{margalit})  
states that there exists an 
element $F_{n,f}$ of the extended mapping class group of $\Sn$  
whose action on $\mathcal{C_P}(\Sn)$ is the automorphism 
$\phi_{n,f}$. 

We now consider the map $f \circ F_{n,f} : \Sn \rightarrow \Sn'$ and 
extend it rigidly to the surface $\surf$. Let $g_{n,f}$ be the 
asymptotically rigid mapping class  of $\surf$ resulting from this extension. 
By construction, $\Psi(g_{n,f})$ coincides with $\phi$ on the 
restricted link $\LEn$. 
\end{proof}

\subsection{Extension of the isomorphisms of the link}

We consider an extension of  
the map $\mu_c: \Bext\to {\rm Perm}(\Z)$, defined in the proof of Lemma \ref{lemcarac}, 
from $\Bext$ to $\AC$. 
As the most general case will not be needed later, we restrict ourselves to the subgroup 
$\AC_E$ of those 
$\phi\in \AC$ satisfying $\phi(E)=E$.  For every $c\in E$ let $c'\in E$ be the curve  
with the property  that $\phi(V_c(E))=V_{c'}(E)$.  Then $\mu_c$ is 
defined by $\phi(F_{c_k})=F_{c'_{\mu_c(\phi)(k)}}$. It is clear that 
$\mu_c: \AC_E\to {\rm Perm}(\Z)$ is a twisted homomorphism, namely 
$\mu_c(\phi_1\circ\phi_2)=\mu_{\phi_2(c)}(\phi_1)\circ \mu_c(\phi_2)$. 

Lemma \ref{lemcompactn} allows us to generalize some local characterizations 
described in section \ref{sectioncarac} to all automorphisms of $\C$. 
More 
precisely, we have the following result :

\begin{lemma}\label{lemquasirigide2bis}
Consider  $\phi \in \AC$ such that $\phi(E)=E$. 
Let $P'$  and $P''$ be two almost admissible pairs of pants which are 
bounded by the curves $a,a',a''$ and $a,b,b'$, respectively. 
We assume that:
\begin{enumerate}
\item $a$ is closer to the fundamental pair of pants than $a'$ and $a''$;
\item the vertices $V_a(E),V_{a'}(E), V_{a''}(E)$ and $V_{b}(E)$  of $\LEA$ are point-wise fixed by $\phi$.
\end{enumerate}
 Then $\mu_a(\phi)(k)=\varepsilon(\phi)k+p$,  where $p$ is even and 
$\varepsilon(\phi)\in \{-1,1\}$.
\end{lemma}
\begin{proof}
We choose a large enough admissible sub-surface $\Sn'$ as in section \ref{induced} so that the restricted link $\LEn$ 
contains $V_a(E)$, $V_{a'}(E)$, $V_{a''}(E)$, $V_{b}(E)$ and $V_{b'}(E)$. 
From Lemma \ref{lemcompactn}, there is an element $f \in \Bext$ such that the automorphisms $\Psi(f)$ and $\phi$ coincide on the restricted link $\LEn$.  
Thus, the  
properties valid for $\Psi(f)$ which were given in 
Lemma \ref{even} are also valid for $\phi$.
\end{proof}

\subsection{Proof of the theorem}

Let $\phi$ be an automorphism of $\C$. We shall construct an 
element of $\Bext$ which acts as $\phi$ on $\C$.

According to Lemma \ref{lemcompactn}, there is an 
element $g$ of $\Bd$ which is quasi-rigid outside a quasi-admissible sub-surface 
$\Sigma_{0,n}$, $n\geq 5$, of $\surf$, such that $\phi(E)$ coincide with $E$ outside 
$g(\Sigma_{0,n})$. Moreover, the automorphisms $\phi$ and $\Psi(g)$ 
coincide on the restricted link $\LEn=\LE\cap \mathcal{C_P}(\Sn)$ on $\Sigma_{0,n}$. 
We can assume that $\Sigma_{0,n}$ contains the fundamental pair of pants. 

We now consider the automorphism $\phi \circ \Psi(g^{-1})$. Let $a$ be 
a boundary component of $\Sn$ (so that $a$ is a curve of $E$) and $P_a$ be the 
 pair of pants of $\surf$ with boundary in $E$ such that $P _a\cap \Sn = a$. Then $P_a$ has three 
boundary components $a,a',a''$ defining a triangle 
$V_a(E),V_{a'}(E),V_{a''}(E)$ in $\LEA$. Both the triangle and its vertex $V_a(E)$ are invariant by 
$\phi \circ \Psi(g^{-1})$. By possibly composing $g$ with $d_a$ we can assume that 
this triangle in $\LEA$ is point-wise invariant, while $\phi \circ \Psi(g^{-1})$ 
still acts as  identity on $\LEn$, because $\Psi(d_a)$ is  identity on $\LEn$.
 Observe that $\varepsilon(\phi\circ \Psi(g^{-1}))=1$, as this automorphism is  identity on $V_b(E_n)\subset  
\mathcal{C_P}(\Sigma_{0,n})$, when $b$ is a curve of  $E\cap \Sigma_{0,n}$ not homotopic to a boundary component
and adjacent to $a$. 

According to 
Lemma \ref{lemquasirigide2bis}, there exists  an even $p\in \Z$ such that 
for any $k \in \Z$, $\mu_a(\phi\circ \Psi(g^{-1}))(k)=k+p$, 
because the vertices $V_a(E),V_{a'}(E),V_{a''}(E)$ and $V_b(E)$ are 
point-wise invariant by $\phi \circ \Psi(g^{-1})$. 
 Let then $g'=d_a^p \circ g$. Due to the 
description of the action of an half-twist on the link, we know that $\phi$ 
and $\Psi(g')$ coincide on $\LE\cap \mathcal{C_P}(\Sigma_{0,n}\cup P_a)$.  

By induction, there exists  an infinite product $d$ of half-twists  along curves in $E$ 
accumulating to infinity  such that the automorphisms
$\Psi(d\circ g)$ and $\phi$ coincide on all vertices adjacent 
to $E$ in $\C$. Now $d\circ g\in \Bext$ and these two automorphisms are the same by  
Lemma \ref{lemuniciteauto}. Thus $\Psi$ is onto. 

\vspace{0.2cm}

{\bf Acknowledgements}. The authors are indebted to 
Javier Aramayona, Ariadna Fossas, Pierre Lochak, Athanase Papadopoulous 
and Vlad Sergiescu for useful discussions  and to the referee for pointing out several inaccuracies 
and for his/her suggestions which helped improving the presentation. 
They were supported by the ANR 2011 BS 01 020 01 ModGroup. 
Part of this work was done while the first author visited Erwin Schr\"odinger 
Institute which he wants to thank warmly for hospitality and support.


\begin{thebibliography}{10}
\begin{small}

\bibitem{A}
J. Aramayona, 
Simplicial embeddings between pants graphs, 
{\em Geom. Dedicata} 144 (2010), 115–-128. 


\bibitem{BPS}
S.Bonnot, R.Penner and D.Saric, 
A presentation for the baseleaf preserving mapping class group of the punctured solenoid, {\em Algebr. Geom. Topol.} 7 (2007), 1171–-1199. 


\bibitem{brin2}
M.G. Brin, 
The algebra of strand splitting. I. A braided version of Thompson's group V.
{\em J. Group Theory} 10 (2007), no. 6, 757--788. 




\bibitem{CFP}
J.~W. Cannon, W.~J. Floyd, and W.~R. Parry.
\newblock Introductory notes in {R}ichard {T}hompson's groups.
\newblock {\em Enseign. Math.}, 42 (1996), 215--256.

\bibitem{dehornoy1}
P.~Dehornoy, 
Geometric presentations for Thompson's groups, {\em  J. Pure Appl. Algebra} 203 (2005), no. 1-3, 1--44. 

\bibitem{dehornoy2}
P.~Dehornoy, 
The group of parenthesized braids, 
{\em Adv. Math.} 205 (2006), no. 2, 354–409. 


\bibitem{De}
 M. Dehn,  Papers on group theory and topology. J. Stillwell (eds.), Springer Verlag, Berlin-New York, 1987. 
 



\bibitem{fossasnguyen}
A.~Fossas, M.~Nguyen.
\newblock Thompson's group T is the orientation-preserving automorphism group of a cellular complex.
\newblock {\em Publicacions Matematiques}, 56 (2012), 305--326.

\bibitem{universalmcg}
L.~Funar and C.~Kapoudjian.
\newblock On a universal mapping class group of genus zero.
\newblock {\em Geometric and Functional Analysis}, 14 (2004), 965--1012.

\bibitem{braidedthompson}
L.~Funar and C.~Kapoudjian.
\newblock The braided {P}tolemy-{T}hompson group is finitely presented.
\newblock {\em Geom. \& Topol.}, 12 (2008), 475--530.




\bibitem{irmak}
E.~Irmak.
\newblock Complexes of nonseparating curves and mapping class groups.
\newblock {\em Michigan Math. J.}, 54 (2006), 81--110.

\bibitem{irmakKorkmaz}
E.~Irmak and M.~Korkmaz.
\newblock Automorphisms of the {H}atcher-{T}hurston complex.
\newblock {\em Israel Journal of Math.}, 162 (2007), 183--196.

\bibitem{irmakMccarthy}
E.~Irmak and J.~McCarthy.
\newblock Injective simplicial maps of the arc complex.
\newblock {\em Turkish Journal of Math.}, 34 (2010), 339--354.

\bibitem{ivanov}
N.~Ivanov.
\newblock Automorphism of complexes of curves and of {T}eichm{\"u}ller spaces.
\newblock {\em Int. Math. Res. Notices}, vol. 14, 651--666, 1997.

\bibitem{korkmaz}
M.~Korkmaz.
\newblock Automorphisms of {c}omplexes of {c}urves on {p}unctured {s}pheres and
  on {p}unctured tori.
\newblock {\em Topology Appl.}, 95 (1997), 85--111.

\bibitem{korkmazPapadopoulos}
M.~Korkmaz and A.~Papadopoulos.
\newblock On the arc and curve complex of a surface.
\newblock {\em Math. Proc. Cambridge Phil. Soc.} 148 (3) (2010), 473--483.


\bibitem{lochak}
P.~Lochak and L.~Schneps.
\newblock The universal {P}tolemy-{T}eichm{\"u}ller groupoid, 
\newblock in {\em Geometric {G}alois actions}, vol. 2, 
Lecture Notes Ser. 243, {C}ambridge {U}niv.
  {P}ress, Cambridge, 1997, 325--347.

\bibitem{L}
P.~Lochak,  
Results and conjectures in profinite Teichm\"uller theory, 
 in {\em Galois-Teichm\"uller theory and Arithmetic
Geometry}, Advanced studies in pure mathematics
63, 2012, 263--335. 


\bibitem{luo}
F.~Luo.
\newblock Automorphisms of the {C}omplex of {C}urves.
\newblock {\em Topology}, 39 (2000), 283--298.

\bibitem{margalit}
D.~Margalit.
\newblock The automorphism group of the pants complex.
\newblock {\em Duke Math. J.}, 121 (2004), 457--479.

\bibitem{schmutz}
P.~Schmutz-Schaller.
\newblock Mapping class groups of hyperbolic surfaces and automorphism groups
  of graphs.
\newblock {\em Compositio Math.}, 122 (2000), 243--260.


\end{small}

\end{thebibliography}
\end{document}